\def\C{\mathbb{C}}
\def\Z{\mathbb{Z}}
\def\R{{\mathbb{R}}}
\def\id{\mathrm{id}}
\def\g{\ensuremath{\mathfrak{g}}}
\def\sl{\ensuremath{\mathfrak{sl}}}
\def\w{\mathbf{w}}
\def\T{\ensuremath{\mathfrak{T}}}
\def\ke{{\tilde e}}
\def\kf{{\tilde f}}
\def\cprime{$'$}
\DeclareMathOperator{\im}{im} 
\DeclareMathOperator{\Hom}{Hom}
\DeclareMathOperator{\End}{End}
\DeclareMathOperator{\wt}{wt}
\DeclareMathOperator{\Span}{Span}
\DeclareMathOperator{\diag}{diag}
\DeclareMathOperator{\Ob}{Ob}
\DeclareMathOperator{\flip}{flip}
\newtheorem{theo}{Theorem}[section]
\newtheorem{prop}[theo]{Proposition}
\newtheorem{lem}[theo]{Lemma}
\newtheorem{defin}[theo]{Definition}
\newtheorem{example}[theo]{Example}
\newtheorem*{rem*}{Remark}
\numberwithin{equation}{section}
\begin{document}
\title{Braided and coboundary monoidal categories}
\author{Alistair Savage}
\address{University of Ottawa\\
Ottawa, Ontario \\ Canada} \email{alistair.savage@uottawa.ca}
\thanks{This research was supported by the Natural
Sciences and Engineering Research Council (NSERC) of Canada}
\subjclass[2000]{Primary: 17B37, 18D10; Secondary: 16G20}
\date{April 29, 2008}

\dedicatory{Dedicated to Ivan Shestakov on his sixtieth birthday}

\begin{abstract}
We discuss and compare the notions of braided and coboundary
monoidal categories. Coboundary monoidal categories are analogues of
braided monoidal categories in which the role of the braid group is
replaced by the cactus group.  We focus on the categories of
representations of quantum groups and crystals and explain how while
the former is a braided monoidal category, this structure does not
pass to the crystal limit.  However, the categories of
representations of quantum groups of finite type also possess the
structure of a coboundary category which does behave well in the
crystal limit.  We explain this construction and also a recent
interpretation of the coboundary structure using quiver varieties.
This geometric viewpoint allows one to show that the category of
crystals is in fact a coboundary monoidal category for arbitrary
symmetrizable Kac-Moody type.
\end{abstract}

\maketitle \thispagestyle{empty}


\section*{Introduction}

In this expository paper we discuss and contrast two types of
categories -- braided monoidal categories and coboundary monoidal
categories -- paying special attention to how the categories of
representations of quantum groups and crystals fit into this
framework.  Monoidal categories are essentially categories with a
tensor product, such as the categories of vector spaces, abelian
groups, sets and topological spaces. Braided monoidal categories are
well-studied in the literature. They are monoidal categories with an
action of the braid group on multiple tensor products.  The example
that interests us the most is the category of representations of a
quantum group $U_q(\g)$.  Coboundary monoidal categories are perhaps
less well known than their braided cousins. The concept is similar,
the difference being that the role of the braid group is now played
by the so-called \emph{cactus group}. A key component in the
definition of a coboundary monoidal category is the \emph{cactus
commutor}, which assumes the role of the braiding.

The theory of crystals can be thought of as the $q \to \infty$ (or
$q \to 0$) limit of the theory of quantum groups.  In this limit,
representations are replaced by combinatorial objects called crystal
graphs.  These are edge-colored directed graphs encoding important
information about the representations from which they come.
Developing concrete realizations of crystals is an active area of
research and there exist many different models.

It is interesting to ask if the structure of a braided monoidal
category passes to the crystal limit.  That is, does one have an
induced structure of a braided monoidal category on the category of
crystals.  The answer is no.  In fact, one can prove that it is
impossible to give the category of crystals the structure of a
braided monoidal category (see
Proposition~\ref{prop:crystals-not-braided}).  However, the
situation is more hopeful if one instead considers coboundary
monoidal categories. For quantum groups of finite type, there is a
way -- a unitarization procedure introduced by Drinfel\cprime d
\cite{Dri90} -- to use the braiding on the category of
representations to define a cactus commutor on this category.  This
structure passes to the crystal limit and one can define a
coboundary structure on the category of crystals in finite type (see
\cite{HK06,KT07}). Kamnitzer and Tingley \cite{KT06} gave an
alternative definition of the crystal commutor which makes sense for
quantum groups of arbitrary symmetrizable Kac-Moody type. However,
while this definition agrees with the previous one in finite type,
it is not obvious that it satisfies the desired properties, giving
the category of crystals the structure of a coboundary category, in
other types.

In \cite{Sav08a}, the author gave a geometric realization of the
cactus commutor using quiver varieties.  In this setting, the
commutor turns out to have a very simple interpretation -- it
corresponds to simply taking adjoints of quiver representations.
Equipped with this geometric description, one is able to show that
the crystal commutor satisfies the requisite properties and thus the
category of crystals, in arbitrary symmetrizable Kac-Moody type, is
a coboundary category.

In the current paper, when discussing the topics of quantum groups
and crystals, we will often restrict our attention to the quantum
group $U_q(\sl_2)$ and its crystals.  This allows us to perform
explicit computations illustrating the key concepts involved.  The
reader interested in the more general case can find the definitions
in the references given throughout the paper.

The organization of this paper is as follows.  In
Section~\ref{sec:groups}, we introduce the braid and cactus groups
that play an important role in the categories in which we are
interested.  In Sections~\ref{sec:monoidal}
through~\ref{sec:coboundary-monoidal}, we define monoidal, braided
monoidal, and coboundary monoidal categories.  We review the theory
of quantum groups and crystals in
Section~\ref{sec:quantum-groups-crystals}.  In
Section~\ref{sec:crystal-commutor} we recall the various definitions
of cactus commutors in the categories of representations of quantum
groups and crystals.  Finally, in Section~\ref{sec:geom-commutor},
we give the geometric interpretation of the commutor.

The author would like to thank J. Kamnitzer and P. Tingley for very
useful discussions during the writing of this paper and for helpful
comments on an earlier draft.


\section{The braid and cactus groups} \label{sec:groups}

\subsection{The braid group} \label{sec:braid-group}

\begin{defin}[Braid group]
For $n$ a positive integer, the \emph{$n$-strand Braid group}
$\mathcal{B}_n$ is the group with generators $\sigma_1, \dots,
\sigma_{n-1}$ and relations
\begin{enumerate}
\item $\sigma_i \sigma_j = \sigma_j \sigma_i$ for $|i-j| \ge 2$, and
\item $\sigma_i \sigma_{i+1} \sigma_i = \sigma_{i+1} \sigma_i
\sigma_{i+1}$ for $1 \le i \le n-2$.
\end{enumerate}
These relations are known as the \emph{braid relations} and the
second is often called the \emph{Yang-Baxter equation}.
\end{defin}

Recall that the symmetric group $S_n$ is the group on generators
$s_1, \dots, s_{n-1}$ satisfying the same relations as for the
$\sigma_i$ above in addition to the relations $s_i^2 =1$ for all $1
\le i \le n-1$.  We thus have a surjective group homomorphism
$\mathcal{B}_n \twoheadrightarrow S_n$.  The kernel of this map is
called the \emph{pure braid group}.

The braid group has several geometric interpretations.  The one from
which its name is derived is the realization in terms of braids.  An
\emph{$n$-strand braid} is an isotopy class of a union of $n$
non-intersecting smooth curve segments (strands) in $\R^3$ with end
points $\{1,2,\dots,n\} \times \{0\} \times \{0,1\}$, such that the
third coordinate is strictly increasing from 0 to 1 in each strand.
The set of all braids with multiplication giving by placing one
braid on top of another (and rescaling so that the third coordinate
ranges from 0 to 1) is isomorphic to $\mathcal{B}_n$ as defined
algebraically above.

The braid group is also isomorphic to the mapping class group of the
$n$-punctured disk -- the group of self-homeomorphisms of the
punctured disk with $n$-punctures modulo the subgroup consisting of
those homeomorphisms isotopic to the identity map.  One can picture
the isomorphism by thinking of each puncture being connected to the
boundary of the disk by a string.  Each homeomorphism of the
$n$-punctured disk can then be seen to yield a braiding of these
strings.  The pure braid group corresponds to the classes of
homeomorphisms that map each puncture to itself.

A similar geometric realization of the braid group is as the
fundamental group of the configuration space of $n$ points in the
unit disk $D$. A loop from one configuration to itself in this space
defines an $n$-strand braid where each strand is the trajectory in
$D \times [0,1]$ traced out by one of the $n$ points. If the points
are labeled, then we require each point to end where it started and
the corresponding fundamental group is isomorphic to the pure braid
group.


\subsection{The cactus group}

Fix a positive integer $n$.  For $1 \le p < q \le n$, let
\[
\hat s_{p,q} = \begin{pmatrix} 1 & \cdots & p-1 & p & p+1 & \cdots &
q & q+1 & \cdots & n \\ 1 & \cdots & p-1 & q & q-1 & \cdots & p &
q+1 & \cdots & n \end{pmatrix} \in S_n.
\]
Since $s_{i,i+1}=s_i$, these elements generate $S_n$. If $1 \le p <
q \le n$ and $1 \le k < l \le n$, we say that $p<q$ and $k<l$ are
\emph{disjoint} if $q < k$ or $l < p$. We say that $p<q$
\emph{contains} $k<l$ if $p \le k < l \le q$.

\begin{defin}[Cactus group]
For $n$ a positive integer, the \emph{$n$-fruit cactus group} $J_n$
is the group with generators $s_{p,q}$ for $1 \le p < q \le n$ and
relations
\begin{enumerate}
\item $s^2_{p,q} = 1$,
\item $s_{p,q} s_{k,l} = s_{k,l} s_{p,q}$ if $p<q$ and $k<l$ are
disjoint, and
\item $s_{p,q} s_{k,l} = s_{r,t} s_{p,q}$ if $p<q$ contains $k<l$,
where $r=\hat s_{p,q}(l)$ and $t=\hat s_{p,q}(k)$.
\end{enumerate}
\end{defin}
It is easily checked that the elements $\hat s_{p,q}$ of the
symmetric group satisfy the relations defining the cactus group and
thus the map $s_{p,q} \mapsto \hat s_{p,q}$ extends to a surjective
group homomorphism $J_n \twoheadrightarrow S_n$.  The kernel of this
map is called the \emph{pure cactus group}.

The cactus group also has a geometric interpretation.  In
particular, the kernel of the surjection $J_n \twoheadrightarrow
S_n$ is isomorphic to the fundamental group of the Deligne-Mumford
compactification $\overline{M}_0^{n+1}(\R)$ of the moduli space of
real genus zero curves with $n+1$ marked points.  The generator
$s_{p,q}$ of the cactus group corresponds to a path in
$\overline{M}_0^{n+1}(\R)$ in which the marked points $p, \dots, q$
balloon off into a new component, this components flips, and then
the component collapses, the points returning in reversed order.
Elements of $\overline{M}_0^{n+1}(\R)$ look similar to cacti of the
genus {\it Opuntia} (the marked points being flowers) which
justifies the name \emph{cactus group} (see
Figure~\ref{fig:cactus}).  Note the similarity with the last
geometric realization of the braid group mentioned in
Section~\ref{sec:braid-group}.  Both the pure braid group and the
pure cactus group are fundamental groups of certain spaces with
marked points. We refer the reader to \cite{DJS03,Dev99,HK06} for
further details on this aspect of the cactus group.
\begin{figure}
\centering
\includegraphics[width=3.5cm]{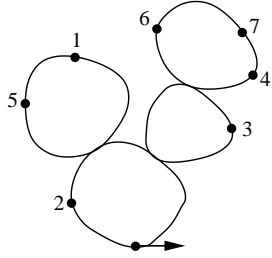}
\caption{A 7-fruited cactus} \label{fig:cactus}
\end{figure}


\subsection{The relationship between the braid and cactus groups}

The relationship between the braid group and the cactus group is not
completely understood.  While the symmetric group is a quotient of
both, neither is a quotient of the other.  However, there is a
homomorphism from the cactus group into the pro-unipotent completion
of the braid group (see the proof of Theorem~3.14 in \cite{EHKR}).
This map is closely related to the unitarization procedure of
Drinfel\cprime d to be discussed in
Section~\ref{sec:Drinfeld-unitarization}.

In the next few sections, we will define categories that are closely
related to the braid and cactus groups.  We will see that there are
some connections between the two.  In addition to the unitarization
procedure, we will see that braidings satisfy the so-called cactus
relation (see Proposition~\ref{prop:braiding-cactus}).


\section{Monoidal categories} \label{sec:monoidal}

\subsection{Definitions}

Recall that for two functors $F, G: \mathcal{C} \to \mathcal{D}$ a
\emph{natural transformation} $\varphi : F \to G$ is a collection of
morphisms $\varphi_U : F(U) \to G(U)$, $U \in \Ob \mathcal{C}$, such
that for all $f \in \Hom_{\mathcal{C}}(U,V)$, we have $\varphi_V
\circ F(f) = G(f) \circ \varphi_U : F(U) \to G(V)$.  If the maps
$\varphi_U$ are all isomorphisms, we call $\varphi$ a \emph{natural
isomorphism}.  We will sometimes refer to the $\varphi_U$ themselves
as \emph{natural isomorphisms} when the functors involved are clear.

\begin{defin}[Monoidal Category]
A \emph{monoidal category} is a category $\mathcal{C}$ equipped with
the following:
\begin{enumerate}
\item a bifunctor $\otimes : \mathcal{C} \times \mathcal{C} \to
\mathcal{C}$ called the \emph{tensor product},

\item natural isomorphisms (the \emph{associator})
\[
\alpha_{U,V,W} : (U \otimes V) \otimes W
\stackrel{\cong}{\longrightarrow} U \otimes (V \otimes W)
\]
for all $U,V,W \in \Ob \mathcal{C}$ satisfying the \emph{pentagon
axiom}: for all $U,V,W,X \in \Ob \mathcal{C}$, the diagram
\[
\xymatrix{ & ((U \otimes V) \otimes W) \otimes X
\ar[dl]_{\alpha_{U,V,W} \otimes \id_X} \ar[dr]^{\alpha_{U \otimes V,
W, X}} &
\\ (U \otimes (V \otimes W)) \otimes X \ar[d]^{\alpha_{U,V \otimes W, X}}
& & (U \otimes V) \otimes (W \otimes X) \ar[d]_{\alpha_{U,V,W \otimes X}} \\
U \otimes ((V \otimes W) \otimes X) \ar[rr]^{\id_U \otimes
\alpha_{V,W,X}} & & U \otimes (V \otimes (W \otimes X))}
\]
commutes, and

\item a \emph{unit object} $\mathbf{1} \in \Ob \mathcal{C}$ and
natural isomorphisms
\[
\lambda_V : \mathbf{1} \otimes V \stackrel{\cong}{\longrightarrow}
V, \quad \rho_V : V \otimes \mathbf{1}
\stackrel{\cong}{\longrightarrow} V
\]
for every $V \in \Ob \mathcal{C}$, satisfying the \emph{triangle
axiom}: for all $U,V \in \Ob \mathcal{C}$, the diagram
\[
\xymatrix{ (U \otimes \mathbf{1}) \otimes V
\ar[rr]^{\alpha_{U,\mathbf{1},V}} \ar[dr]^{\rho_U \otimes \id_V} & &
U
\otimes (\mathbf{1} \otimes V) \ar[dl]^{\id_U \otimes \lambda_V} \\
& U \otimes V & }
\]
commutes.
\end{enumerate}
A monoidal category is said to be \emph{strict} if we can take
$\alpha_{U,V,W}$, $\lambda_V$ and $\rho_V$ to be identity morphisms
for all $U,V,W \in \Ob \mathcal{C}$.  That is, a \emph{strict
monoidal category} is one in which
\[
V \otimes \mathbf{1} = V,\quad \mathbf{1} \otimes V = V,\quad (U
\otimes V) \otimes W = U \otimes (V \otimes W)
\]
for all $U,V,W \in \Ob \mathcal{C}$.
\end{defin}

The MacLane Coherence Theorem \cite[{\S}VII.2]{Mac98} states that
the pentagon and triangle axioms ensure that any for any two
expressions obtained from $V_1 \otimes V_2 \otimes \dots \otimes
V_n$ by inserting $\mathbf{1}$'s and parentheses, all isomorphisms
of these two expressions consisting of compositions of $\alpha$'s,
$\lambda$'s and $\rho$'s are equal.  This condition is called the
\emph{associativity axiom}.  In a monoidal category, we can use the
natural isomorphisms to identify all expressions of the above type
and so we often write multiple tensor products without brackets. In
fact, every monoidal category is equivalent to a strict one
\cite[{\S}XI.3]{Mac98}.


\subsection{Examples} \label{sec:monoidal-examples}

Most of the familiar tensor products yield monoidal categories. For
instance, for a commutative ring $R$, the category of $R$-modules is
a monoidal category.  We have the usual tensor product $A \otimes_R
B$ of modules $A$ and $B$. The unit object is $R$ and we have the
natural isomorphisms
\begin{gather*}
\alpha : A \otimes_R (B \otimes_R C) \cong (A \otimes_R B) \otimes_R
C,\quad
\alpha(a \otimes_R (b \otimes_R c)) = (a \otimes_R b) \otimes_R c \\
\lambda : R \otimes_R A \cong A,\quad \lambda(r \otimes_R a) =
ra,\\
\rho : A \otimes_R R \cong A,\quad \rho(a \otimes_R r) = ra.
\end{gather*}
In particular, the categories of abelian groups (where $R=\Z$) and
vector spaces (where $R$ is a field) are monoidal categories.  In a
similar fashion, the category of $R$-algebras is monoidal under the
usual tensor product of algebras. For an arbitrary (not necessarily
commutative) ring $R$, the category of $R$-$R$ bimodules is also
monoidal under $\otimes_R$. The categories of sets and topological
spaces with the cartesian product are monoidal categories with unit
objects 1 (the set with a single element) and $*$ (the
single-element topological space) respectively.


\section{Braided monoidal categories} \label{sec:braided-monoidal}

\subsection{Definitions}

\begin{defin}[Braided monoidal category]
A \emph{braided monoidal category} (or \emph{braided tensor
category}) is a monoidal category $\mathcal{C}$ equipped with
natural isomorphisms $\sigma_{U,V} : U \otimes V \to V \otimes U$
for all $U,V \in \Ob \mathcal{C}$ satisfying the \emph{hexagon}
axiom:  for all $U,V,W \in \mathcal{C}$, the diagrams
\[
\xymatrix{ & U \otimes (V \otimes W) \ar[rr]^{\sigma_{U,V
\otimes W}} & & (V \otimes W) \otimes U \ar[dr]^{\alpha_{V,W,U}} & \\
(U \otimes V) \otimes W \ar[ur]^{\alpha_{U,V,W}}
\ar[dr]^{\sigma_{U,V} \otimes \id_W} & & & & V \otimes (W \otimes U)
\\
& (V \otimes U) \otimes W \ar[rr]^{\alpha_{V,U,W}} & & V \otimes (U
\otimes W) \ar[ur]^{\id_V \otimes \sigma_{U,W}} & }
\]
and
\[
\xymatrix{ & U \otimes (V \otimes W) \ar[rr]^{\sigma^{-1}_{V
\otimes W,U}} & & (V \otimes W) \otimes U \ar[dr]^{\alpha_{V,W,U}} & \\
(U \otimes V) \otimes W \ar[ur]^{\alpha_{U,V,W}}
\ar[dr]^{\sigma^{-1}_{V,U} \otimes \id_W} & & & & V \otimes (W
\otimes U)
\\
& (V \otimes U) \otimes W \ar[rr]^{\alpha_{V,U,W}} & & V \otimes (U
\otimes W) \ar[ur]^{\id_V \otimes \sigma^{-1}_{W,U}} & }
\]
commute.  The collection of maps $\sigma_{U,V}$ is called a
\emph{braiding}.
\end{defin}

For a braided monoidal category $\mathcal{C}$ and $U,V,W \in \Ob
\mathcal{C}$, consider the following diagram where we have omitted
bracketings and associators (or assumed that $\mathcal{C}$ is
strict).
\[
\xymatrix{ & V \otimes U \otimes W  \ar[dr]^{\id_V \otimes \sigma_{U,W}} & \\
U \otimes V \otimes W \ar[ur]^{\sigma_{U,V} \otimes \id_W}
\ar[rr]^{\sigma_{U,V \otimes W}} \ar[d]^{\id_U \otimes \sigma_{V,W}}
& & V \otimes W \otimes U \ar[d]^{\sigma_{V,W} \otimes \id_U} \\
U \otimes W \otimes V \ar[rr]^{\sigma_{U,W \otimes V}}
\ar[dr]_{\sigma_{U,W} \otimes \id_V} & & W \otimes V \otimes U \\
& W \otimes U \otimes V \ar[ur]_{\id_W \otimes \sigma_{U,V}}}
\]
The top and bottom triangles commute by the hexagon axiom and the
middle rectangle commutes by the naturality of the braiding (that
is, by the fact that it is a natural isomorphism).  Therefore, if we
write $\sigma_1$ for the map $\sigma \otimes \id$ and $\sigma_2$ for
the map $\id \otimes \sigma$, we have
\[
\sigma_1 \sigma_2 \sigma_1 = \sigma_2 \sigma_1 \sigma_2,
\]
the Yang-Baxter relation for $\mathcal{B}_3$.  It follows that in a
braided monoidal category, the braid group $\mathcal{B}_n$ acts on
$n$-fold tensor products. That is, if we denote $\id^{\otimes (i-1)}
\otimes \sigma \otimes \id^{\otimes (n-i-1)}$ by $\sigma_i$, then a
composition of such maps depends only on the corresponding element
of the braid group.

\begin{defin}[Symmetric monoidal category]
A \emph{symmetric monoidal category} is a braided monoidal category
$\mathcal{C}$ where $\sigma_{V,U} \circ \sigma_{U,V} = \id_{U
\otimes V}$ for all $U, V \in \Ob \mathcal{C}$.
\end{defin}
In any symmetric monoidal category, the symmetric group $S_n$ acts
on $n$-fold tensor products in the same way that the braid group
acts in a braided monoidal category. We note that there is
conflicting terminology in the literature. For instance, some
authors refer to monoidal categories as tensor categories while
others (see, for instance, \cite{CP95}) refer to symmetric monoidal
categories as tensor categories and braided monoidal categories as
quasitensor categories.


\subsection{Examples}

Many of the examples in Section~\ref{sec:monoidal-examples} can in
fact be given the structure of a symmetric monoidal category.  In
particular, the categories of $R$-modules and $R$-algebras over a
commutative ring $R$, the category of sets, and the category of
topological spaces are all symmetric monoidal categories.  In all of
these examples, the braiding is given by $\sigma(a \otimes b) = b
\otimes a$.

An example that shall be especially important to us is the category
of representations of a quantum group.  It can be given the
structure of a braided monoidal category but is not a symmetric
monoidal category.


\section{Coboundary monoidal categories}
\label{sec:coboundary-monoidal}

\subsection{Definitions}
Coboundary monoidal categories are analogues of braided monoidal
categories in which the role of the braid group is replaced by the
cactus group.  As we shall see, they are better suited to the theory
of crystals than braided monoidal categories.

\begin{defin}[Coboundary monoidal category]
\label{def:coboundary-cat} A \emph{coboundary monoidal category} is
a monoidal category $\mathcal{C}$ together with natural isomorphisms
$\sigma^c_{U,V} : U \otimes V \to V \otimes U$ for all $U,V \in \Ob
\mathcal{C}$ satisfying the following conditions:
\begin{enumerate}
\item $\sigma^c_{V,U} \circ \sigma^c_{U,V} = \id_{U \otimes V}$, and
\item \emph{the cactus relation}: for all $U,V,W \in \Ob \mathcal{C}$, the diagram
\begin{equation} \label{eq:cactus-relation}
\xymatrix{ U \otimes V \otimes W \ar[rr]^{\sigma^c_{U,V} \otimes
\id_W} \ar[d]_{\id_U \otimes \sigma^c_{V,W}} & & V \otimes U \otimes
W \ar[d]^{\sigma^c_{V \otimes U,W}}\\
U \otimes W \otimes V \ar[rr]^{\sigma^c_{U,W \otimes V}} & & W
\otimes V \otimes U}
\end{equation}
commutes.
\end{enumerate}
The collection of maps $\sigma^c_{U,V}$ is called a \emph{cactus
commutor}.
\end{defin}
We will use the term \emph{commutor} for a collection of natural
isomorphisms $\sigma_{U,V} : U \otimes V \to V \otimes U$ for all
objects $U, V \in \Ob \mathcal{C}$ (this is sometimes called a
\emph{commutativity constraint}), reserving the term \emph{cactus
commutor} for a commutor satisfying the conditions in the definition
above.

Suppose $\mathcal{C}$ is a coboundary category and $U_1, \dots, U_n
\in \Ob \mathcal{C}$.  For $1 \le p < q \le n$, one defines natural
isomorphisms
\begin{align*}
\sigma^c_{p,q}&=(\sigma^c_{p,q})_{U_1,\dots,U_n} : U_1 \otimes \dots
\otimes U_n \\
& \qquad \qquad \to U_1 \otimes \dots \otimes U_{p-1} \otimes
U_{p+1} \otimes \dots \otimes U_q \otimes U_p \otimes U_{q+1}
\otimes \dots \otimes U_n,\\
\sigma^c_{p,q} &:= \id_{U_1 \otimes \dots \otimes U_{p-1}} \otimes
\sigma^c_{U_p, U_{p+1} \otimes \dots \otimes U_q} \otimes
\id_{U_{q+1} \otimes \dots \otimes U_n}.
\end{align*}
For $1 \le p < q \le n$, one then defines natural isomorphisms
\[
s_{p,q} : U_1 \otimes \dots \otimes U_n \to U_1 \otimes U_2 \otimes
\dots \otimes U_{p-1} \otimes U_q \otimes U_{q-1} \otimes \dots U_p
\otimes U_{q+1} \otimes U_{q+2} \otimes \dots \otimes U_n
\]
recursively as follows.  Define $s_{p,p+1} = \sigma^c_{p,p+1}$ and
$s_{p,q} = \sigma^c_{p,q} \circ s_{p+1,q}$ for $q > p+1$.  We also
set $s_{p,p} = \id$.  The following proposition was proved by
Henriques and Kamnitzer.

\begin{prop}[{\cite[Lemma~3, Lemma~4]{HK06}}]
If $\mathcal{C}$ is a coboundary category and the natural
isomorphisms $s_{p,q}$ are defined as above, then
\begin{enumerate}
\item $s_{p,q} \circ s_{p,q} = \id$,
\item $s_{p,q} \circ s_{k,l} = s_{k,l} \circ s_{p,q}$ if $p<q$ and
$k < l$ are disjoint, and
\item $s_{p,q} \circ s_{k,l} = s_{r,t} \circ s_{p,q}$ if $p<q$ contains $k<l$,
where $r=\hat s_{p,q}(l)$ and $t=\hat s_{p,q}(k)$.
\end{enumerate}
\end{prop}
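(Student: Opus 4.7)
The plan is to prove all three statements by induction on $q-p$, with the cactus relation \eqref{eq:cactus-relation} as the principal tool and naturality of $\sigma^c$ as a constant companion. The key preliminary step is to establish a \emph{symmetric} alternative recursion
\[
s_{p,q} = \bigl(\id \otimes \sigma^c_{U_{q-1} \otimes \cdots \otimes U_p,\, U_q} \otimes \id\bigr) \circ s_{p,q-1}
\]
(with identities on positions outside $[p,q]$). This is proved by induction on $q-p$: write $s_{p,q} = \sigma^c_{p,q} \circ s_{p+1,q}$ via the given recursion, apply the inductive hypothesis to $s_{p+1,q}$, and then rearrange using the cactus axiom with $U = U_p$, $V = U_{p+1} \otimes \cdots \otimes U_{q-1}$, $W = U_q$. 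This symmetric expression, together with the original recursion, gives the flexibility needed for the rest of the proof.

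For statement~(1), induct on $q-p$; the base case $q=p+1$ is the first defining axiom of a coboundary category. For the inductive step, expand the first copy of $s_{p,q}$ in $s_{p,q} \circ s_{p,q}$ using the original recursion and the second copy using the symmetric recursion. Naturality of $\sigma^c$ slides the two inner $\sigma^c$-factors past each other, after which the composition telescopes into two instances of $\sigma^c_{V,U} \circ \sigma^c_{U,V} = \id$. (The three-object case $n=3$, $p=1$, $q=3$ already exhibits the entire pattern.) Statement~(2) is essentially bifunctoriality of $\otimes$: when $[p,q]$ and $[k,l]$ are disjoint, $s_{p,q}$ and $s_{k,l}$ are each built from $\sigma^c$'s tensored with identity on disjoint tensor factors, so the bifunctor identity $(f \otimes \id) \circ (\id \otimes g) = (\id \otimes g) \circ (f \otimes \id)$ yields commutativity directly.

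Statement~(3) is where the genuine work lies; the bookkeeping of indices here will be the main obstacle. Induct on $q-p$ and split into subcases. If $k=p$ and $l=q$, then $r=p$ and $t=q$, so both sides reduce to $s_{p,q} \circ s_{p,q}$ and the claim follows from~(1). Otherwise at least one of $p<k$ and $l<q$ holds. Suppose $p<k$; write $s_{p,q} = \sigma^c_{p,q} \circ s_{p+1,q}$ via the original recursion. The inductive hypothesis applied to $s_{p+1,q} \circ s_{k,l}$ (noting that $[p+1,q]$ still contains $[k,l]$) yields
\[
s_{p+1,q} \circ s_{k,l} = s_{r+1,\,t+1} \circ s_{p+1,q},
\]
and naturality of $\sigma^c_{U_p,\, U_{p+1} \otimes \cdots \otimes U_q}$ identifies $\sigma^c_{p,q} \circ s_{r+1,t+1}$ with $s_{r,t} \circ \sigma^c_{p,q}$, since the action of $\sigma^c_{p,q}$ shifts the position range $[r+1,t+1]$ down to $[r,t]$. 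The case $l<q$ is handled identically using the symmetric recursion and shrinking on the right. Although the values $r = \hat s_{p,q}(l)$ and $t = \hat s_{p,q}(k)$ shift at each stage, once the symmetric recursion is in place every individual manipulation reduces to a single application of the cactus axiom combined with naturality.
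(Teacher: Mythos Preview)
The paper does not supply its own proof of this proposition; it merely cites \cite[Lemma~3, Lemma~4]{HK06}. Your argument is correct and is essentially the proof given in that reference: the crucial preliminary is exactly the ``symmetric'' recursion $s_{p,q} = \bigl(\id \otimes \sigma^c_{U_{q-1}\otimes\cdots\otimes U_p,\,U_q}\otimes\id\bigr)\circ s_{p,q-1}$, established via the cactus axiom, after which (1)--(3) fall to the inductions you describe.
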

Therefore, in a coboundary monoidal category the cactus group $J_n$
acts on $n$-fold tensor products.  This is analogous to the action
of the braid group in a braided monoidal category.  For this reason,
the authors of \cite{HK06} propose the name \emph{cactus category}
for a coboundary monoidal category.

\begin{prop} \label{prop:braiding-cactus}
    Any braiding satisfies the cactus relation.
\end{prop}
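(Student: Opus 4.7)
The plan is to expand the two composite braidings $\sigma_{V \otimes U, W}$ and $\sigma_{U, W \otimes V}$ appearing in the cactus diagram~\eqref{eq:cactus-relation} by means of the hexagon axiom, thereby re-expressing the diagram entirely in terms of pairwise braidings; the two paths around the square will then turn out to be exactly the two sides of the Yang-Baxter equation $\sigma_1 \sigma_2 \sigma_1 = \sigma_2 \sigma_1 \sigma_2$, whose validity in any braided monoidal category was already established in Section~\ref{sec:braided-monoidal}.

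First I would pass to a strict monoidal category equivalent to $\mathcal{C}$, invoking MacLane coherence, so that associators disappear and all parenthesizations may be suppressed. Applying the first hexagon axiom with $(U,V,W)$ relabeled as $(U,W,V)$ gives
$$\sigma_{U, W \otimes V} = (\id_W \otimes \sigma_{U,V}) \circ (\sigma_{U,W} \otimes \id_V),$$
while the second hexagon axiom, after inversion and with an appropriate relabeling, yields
$$\sigma_{V \otimes U, W} = (\sigma_{V,W} \otimes \id_U) \circ (\id_V \otimes \sigma_{U,W}).$$
Substituting these two identities into the cactus square, and writing $\sigma_1$ and $\sigma_2$ for the local braidings acting on positions $1,2$ and $2,3$ of the relevant three-fold tensor products (as in the paper's derivation of Yang--Baxter), the composition along the right-then-down path from $U \otimes V \otimes W$ to $W \otimes V \otimes U$ becomes $\sigma_1 \sigma_2 \sigma_1$, while the down-then-right path becomes $\sigma_2 \sigma_1 \sigma_2$.

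These two are equal by the Yang--Baxter relation, so diagram~\eqref{eq:cactus-relation} commutes, which is precisely the cactus relation. The only real obstacle is bookkeeping: one must apply the hexagon axioms with the correct substitutions, so that the resulting pairwise braidings appear in the order dictated by the cactus square rather than in an order producing, say, two copies of $\sigma_1 \sigma_2 \sigma_1$. Once those substitutions are pinned down, the identification with the two sides of Yang--Baxter is immediate and no further computation is needed.
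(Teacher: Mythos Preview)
Your proposal is correct and follows essentially the same route as the paper: expand $\sigma_{U,W\otimes V}$ via the first hexagon and $\sigma_{V\otimes U,W}$ via (the inverse of) the second hexagon, then observe that the two sides of the cactus square become the two sides of the Yang--Baxter relation. The paper writes this as a single chain of equalities rather than in the $\sigma_1,\sigma_2$ shorthand, but the argument is identical.
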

\begin{proof}
    Suppose $\mathcal{C}$ is a braided monoidal category with
    braiding $\sigma$ and $U,V,W \in \Ob \mathcal{C}$.  By the
    axioms of a braided monoidal category, we have
    \[
        \sigma_{V \otimes U,W}^{-1} = (\id_V \otimes
        \sigma_{U,W}^{-1})(\sigma_{V,W}^{-1} \otimes \id_U)
    \]
    Taking inverses gives
    \[
        \sigma_{V \otimes U, W} = (\sigma_{V,W} \otimes \id_U)(\id_V
        \otimes \sigma_{U,W}).
    \]
    Therefore
    \begin{align*}
        (\sigma_{U,W \otimes V})(\id_U \otimes \sigma_{V,W}) &=
        (\id_W \otimes \sigma_{U,V})(\sigma_{U,W} \otimes \id_V)
        (\id_U \otimes \sigma_{V,W}) \\
        &= (\sigma_{V,W} \otimes \id_U)(\id_V \otimes \sigma_{U,W})
        (\sigma_{U,V} \otimes \id_W) \\
        &= (\sigma_{V \otimes U,W})(\sigma_{U,V} \otimes \id_W).
    \end{align*}
    The first equality uses the definition of a braiding and the second
    is the Yang-Baxter equation.
\end{proof}

Note that Proposition~\ref{prop:braiding-cactus} does not imply that
every braided monoidal category is a coboundary monoidal category
because we require cactus commutors to be involutions whereas
braidings, in general, are not.  In fact, any commutor that is both
a braiding and a cactus commutor is in fact a symmetric commutor
(that is, it endows the category in question with the structure of a
symmetric monoidal category).

\subsection{Examples}
The definition of coboundary monoidal categories was first given by
Drinfel\cprime d in \cite{Dri90}.  The name was inspired by the fact
that the representation categories of coboundary Hopf algebras are
coboundary monoidal categories.  Since the cactus group surjects
onto the symmetric group, any symmetric monoidal category is a
coboundary monoidal category.  Our main example of coboundary
categories which are not symmetric monoidal categories will be the
categories of representations of quantum groups and crystals.
Furthermore, we will see that the category of crystals cannot be
given the structure of a braided monoidal category.  Thus there
exist examples of coboundary monoidal categories that are not
braided.


\section{Quantum groups and crystals}
\label{sec:quantum-groups-crystals}

\subsection{Quantum groups} \label{sec:quantum-groups}
Compact groups and semisimple Lie algebras are rigid objects in the
sense that they cannot be deformed.  However, if one considers the
group algebra or universal enveloping algebra instead, a deformation
is possible. Such a deformation can be carried out in the category
of (noncommutative, noncocommutative) Hopf algebras. These
deformations play an important role in the study of the quantum
Yang-Baxter equation and the quantum inverse scattering method.
Another benefit is that the structure of the deformations and their
representations becomes more rigid and the concepts of canonical
bases and crystals emerge.

We introduce here the quantum group, or quantized enveloping
algebra, defined by Drinfel\cprime d and Jimbo.  For further details
we refer the reader to the many books on the subject (e.g.
\cite{CP95,HK,L93}). Let $\g$ be a Kac-Moody algebra with
symmetrizable generalized Cartan matrix $A = (a_{ij})_{i,j \in I}$
and symmetrizing matrix $D = \diag (s_i \in \Z_{> 0}\ |\ i \in I)$.
Let $P, P^\vee$, and $Q_+$ be the weight lattice, coweight lattice
and positive root lattice respectively.  Let $\C_q$ be the field
$\C(q^{1/2})$ where $q$ is a formal variable. For $n \in \Z$ and any
symbol $x$, we define
\begin{gather*}
    [n]_x = \frac{x^n - x^{-n}}{x-x^{-1}},\quad [0]_x! = 1,\quad [m]_x! =
    [m]_x [m-1]_x \cdots [1]_x \text{ for } m \in \Z_{>0},\\
    \begin{bmatrix} k \\ l \end{bmatrix}_x = \frac{[k]_x!}{[l]_x!
    [k-l]_x!}\ \text{ for } k,l \in \Z_{\ge 0}.
\end{gather*}

\begin{defin}[Quantum group $U_q(\g)$]
The \emph{quantum group} or \emph{quantized enveloping algebra}
$U_q(\g)$ is the unital associative algebra over $\C_q$ with
generators $e_i$, $f_i$ ($i \in I$) and $q^h$ ($h \in P^\vee$) with
defining relations
\begin{enumerate}
    \item $q^0=1$, $q^h q^{h'} = q^{h+h'}$ for $h,h' \in P^\vee$,
    \item $q^h e_i q^{-h} = q^{\alpha_i(h)} e_i$ for $h \in P^\vee$,
    \item $q^h f_i q^{-h} = q^{-\alpha_i(h)} f_i$ for $h \in P^\vee$,
    \item $e_i f_j - f_j e_i = \delta_{ij} {\displaystyle \frac{q^{s_i h_i} - q^{-s_i
    h_i}}{q^{s_i} - q^{-s_i}}} \text{ for } i,j \in I,$
    \item $\sum_{k=0}^{1-a_{ij}} (-1)^k \begin{bmatrix} 1-a_{ij} \\ k
    \end{bmatrix}_{q^{s_i}} e_i^{1-a_{ij}-k} e_j e_i^k = 0$ for $i \ne j$,
    \item $\sum_{k=0}^{1-a_{ij}} (-1)^k \begin{bmatrix} 1-a_{ij} \\ k
    \end{bmatrix}_{q^{s_i}} f_i^{1-a_{ij}-k} f_j f_i^k = 0$ for $i \ne
    j$.
\end{enumerate}
\end{defin}
As $q \to 1$, the defining relations for $U_q(\g)$ approach the
usual relations for $\g$ in the following sense. Taking the
``derivative'' with respect to $q$ in the second and third relations
gives
\begin{gather*}
    h q^{h-1} e_i q^{-h} + q^h e_i \left(-h q^{-h-1} \right) = \alpha_i(h)
    q^{\alpha_i(h)-1} e_i
    \stackrel{q \to 1}{\longrightarrow} he_i - e_ih = [h,e_i] = \alpha_i(h) e_i,\\
    h q^{h-1} f_i q^{-h} + q^h f_i \left(-h q^{-h-1} \right) = - \alpha_i(h)
    q^{-\alpha_i(h)-1} f_i
    \stackrel{q \to 1}{\longrightarrow} hf_i - f_ih = [h,f_i] = -\alpha_i(h) f,
\end{gather*}
Furthermore, if we naively apply L'H{\^ o}pital's rule, we have
\[
    \lim_{q \to 1} \frac{q^{s_i h_i} - q^{-s_i h_i}}{q^{s_i} - q^{-s_i}}
    = \lim_{q \to 1} \frac{s_i h_iq^{s_i h_i-1} + s_i h_iq^{-s_i h_i-1}}{s_i q^{s_i-1}
    + s_i q^{-s_i - 1}}
    = \frac{2s_i h_i}{2s_i} = h_i,
\]
and so the fourth defining relation of $U_q(\g)$ becomes
$[e_i,f_j]=\delta_{ij} h_i$ in the $q \to 1$ limit -- called the
\emph{classical limit}.  Similarly, since we have
\[
[n]_{q^{s_i}} \to n,\text{ and } \begin{bmatrix} 1 - a_{ij} \\ k
\end{bmatrix}_{q^{s_i}} \to \begin{pmatrix} 1 - a_{ij} \\ k
\end{pmatrix} \text{ as } q \to \infty,
\]
the last two relations (called the \emph{quantum Serre relations})
become the usual Serre relations in the classical limit. Thus, we
can think of $U_q(\g)$ as a deformation of $\g$. For a more rigorous
treatment of the classical limit, we refer the reader to
\cite[\S3.4]{HK}.

The algebra $U_q(\g)$ has a Hopf algebra structure given by
comultiplication
\[
    \Delta(q^h) = q^h \otimes q^h,\quad \Delta(e_i)
    = e_i \otimes q^{s_ih_i} + 1 \otimes e_i,\quad \Delta(f_i) = f_i \otimes 1
    + q^{-s_ih_i} \otimes f_i,
\]
counit
\[
    \varepsilon(q^h)=1,\quad \varepsilon(e_i)=\varepsilon(f_i)=0,
\]
and antipode
\[
    \gamma(q^h)=q^{-h},\quad \gamma(e_i)=-e_i q^{-s_ih_i},\quad
    \gamma(f_i)=-q^{s_i h_i} f_i.
\]
There are other choices but we will use the above in what follows.

The representations of $U(\g)$ can be $q$-deformed to
representations of $U_q(\g)$ in such a way that the dimensions of
the weight spaces are invariant under the deformation (see
\cite{HK,Lus88}). The $q$-deformed notion of a weight space is as
follows: for a $U_q(\g)$-module $M$ and $\lambda \in P$, the
$\lambda$-weight space of $M$ is
\[
    M^\lambda = \{v \in M\ |\ q^h v=q^{\lambda(h)} v \ \forall\ h \in P^\vee\}.
\]


\subsection{Crystal bases}
In this section we introduce the theory of crystal bases, which can
be thought of as the $q \to \infty$ limit of the representation
theory of quantum groups. In this limit, representations are
replaced by combinatorial objects called crystal graphs.  These
objects, which are often much easier to compute with than the
representations themselves, can be used to obtain such information
as dimensions of weight spaces (characters) and the decomposition of
tensor products into sums of irreducible representations.  For
details, we refer the reader to \cite{HK}.  We note that in
\cite{HK}, the limit $q \to 0$ is used. This simply corresponds to a
different choice of Hopf algebra structure on $U_q(\g)$.  We choose
to consider $q \to \infty$ to match the choices of \cite{CP95}.

For $n \in \Z_{\ge 0}$ and $i \in I$, define the \emph{divided
powers}
\[
    e_i^{(n)} = e_i^n/[n]_q!,\quad f_i^{(n)} = f_i^n/[n]_q!
\]
Let $M$ be an integrable $U_q(\g)$-module and let $M^\lambda$ be the
$\lambda$-weight space for $\lambda \in P$.  For $i \in I$, any
weight vector $u \in M^\lambda$ can be written uniquely in the form
\[
    u = \sum_{n=0}^\infty f_i^{(n)} u_n, \quad u_n \in \ker e_i \cap
    M^{\lambda+n\alpha_i}.
\]
Define the \emph{Kashiwara operators} $\ke_i, \kf_i : M \to M$ by
\[
    \ke_i u = \sum_{n=1}^\infty f_i^{(n-1)} u_n,\quad \kf_i u =
    \sum_{n=0}^\infty f_i^{(n+1)} u_n.
\]
Let $A$ be the integral domain of all rational functions in $\C_q$
that are regular at $q=\infty$.  That is, $A$ consists of all
rational functions that can be written in the form
$g_1(q^{-1/2})/g_2(q^{-1/2})$ for $g_1(q^{-1/2})$ and
$g_2(q^{-1/2})$ polynomials in $\C[q^{-1/2}]$ with
$g_2(q^{-1/2})|_{q^{-1/2}=0} \ne 0$ (one should think of these as
rational functions whose limit exists as $q \to \infty$).

\begin{defin}[Crystal basis]
A \emph{crystal basis} of a $U_q(\g)$-module $M$ is a pair $(L,B)$
such that
\begin{enumerate}
    \item L is a free $A$-submodule of $M$ such that $M = \C_q \otimes_A L$,
    \item $B$ is a $\C$-basis of the vector space $L/q^{-1/2}L$ over $\C$,
    \item $L = \bigoplus_\lambda L^\lambda$, $B = \bigsqcup_\lambda B^\lambda$
    where $L^\lambda = L \cap M^\lambda$, $B^\lambda = B \cap
    L^\lambda/q^{-1/2}L^\lambda$,
    \item $\ke_i L \subseteq L$, $\kf_i L \subseteq L$ for all $i
    \in I$,
    \item $\ke_i B \subseteq B \cup \{0\}$, $\kf_i B \subseteq B \cup \{0\}$
    for all $i \in I$, and
    \item for all $b,b' \in B$ and $i \in I$, $\ke_i b = b'$ if and only if
    $\kf_i b' = b$.
\end{enumerate}
\end{defin}
It was shown by Kashiwara \cite{K91} that all $U_q(\g)$-modules in
the category $\mathcal{O}^q_{\text{int}}$ (integrable modules with
weight space decompositions and weights lying in a union of sets of
the form $\lambda - Q_+$ for $\lambda \in P$) have unique crystal
bases (up to isomorphism).

A crystal basis can be represented by a \emph{crystal graph}.  The
crystal graph corresponding to a crystal basis $(L,B)$ is an
edge-colored (by $I$) directed graph with vertex set $B$ and a
$i$-colored directed edge from $b'$ to $b$ if $\kf_i b' = b$
(equivalently, if $\ke_i b = b'$).  Crystals can be defined in a
more abstract setting where a crystal consists of such a graph along
with maps $\wt : B \to P$ and $\varphi_i, \varepsilon_i : B \to
\Z_{\ge 0}$ satisfying certain axioms.  In this paper, by the
\emph{category of $\g$-crystals} for a symmetrizable Kac-Moody
algebra $\g$, we mean the category consisting of those crystal
graphs $B$ such that each connected component of $B$ is isomorphic
to some $B_\lambda$, the crystal corresponding to the irreducible
highest weight $U_q(\g)$-module of highest weight $\lambda$, where
$\lambda$ is a dominant integral weight.  In this case
\[
    \wt(b) = \mu \text{ for } b \in B^\mu,\quad \varphi_i(b) = \max
    \{k\ |\ \kf_i^kb \ne 0\},\quad \varepsilon_i(b) = \max \{k\ |\ \ke_i^k
    \ne 0\}.
\]
For the rest of this paper, the word \emph{$\g$-crystal} means an
object in this category.

\begin{example}[Crystal bases of finite-dimensional representations
of $U_q(\sl_2)$] \label{eg:B_n}
For $n \in \Z_{\ge 0}$, let $V_n$ be the irreducible
$U_q(\sl_2)$-module of highest weight $n$.  It is a $q$-deformation
of the corresponding $\mathfrak{sl}_2$-module.  Let $v_n$ be a
highest weight vector of $V_n$ and define
\[
    v_{n-2i} = f^{(i)} v_n.
\]
Then $\{v_n, v_{n-2}, \dots, v_{-n}\}$ is a basis of $V_n$ and $v_j$
has weight $j$.  Let
\begin{align*}
    L &= \Span_A \{v_n, v_{n-2}, \dots, v_{-n}\},\quad \text{and}\\
    B &= \{b_n, b_{n-2}, \dots, b_{-n}\}
\end{align*}
where $b_j$ is the image of $v_j$ in the quotient $L/q^{-1/2}L$.  It
is easily checked that $(L,B)$ is a crystal basis of $V_n$ and the
corresponding crystal graph is
\[
    b_n \longrightarrow b_{n-2} \longrightarrow \cdots \longrightarrow b_{-n}
\]
(since there is only one simple root for $\sl_2$, we omit the
edge-coloring).
\end{example}


\subsection{Tensor products}

One of the nicest features of the theory of crystals is the
existence of the \emph{tensor product rule} which tells us how to
form the crystal corresponding to the tensor product of two
representations from the crystals corresponding to the two factors.

\begin{theo}[Tensor product rule {\cite[Theorem~4.4.1]{HK}}, {\cite[Proposition~14.1.14]{CP95}}]
\label{thm:tensor-product-rule}
Suppose $(L_j,B_j)$ are crystal bases of $U_q(\g)$-modules $M_j$
($j=1,2$) in $\mathcal{O}^q_{\text{int}}$.  For $b \in B_j$ and $i
\in I$, let
\begin{gather*}
    \varphi_i(b) = \max\{ k\ |\ \kf_i^k b \ne 0\},\quad
    \varepsilon_i(b) = \max\{ k\ |\ \ke_i^k b \ne 0\}.
\end{gather*}
Let $L = L_1 \otimes_A L_2$ and $B = B_1 \times B_2$. Then
$(L,B)$ is a crystal basis of $M_1 \otimes_{\C_q} M_2$, where the
action of the Kashiwara operators $\ke_i$ and $\kf_i$ are given by
\begin{align*}
    \ke_i (b_1 \otimes b_2) &= \begin{cases} \ke_i b_1 \otimes b_2 &
    \text{if } \varphi_i(b_1) \ge \varepsilon_i(b_2),\\
    b_1 \otimes \ke_i b_2 & \text{if } \varphi_i(b_1) <
    \varepsilon_i(b_2), \end{cases}\\
    \kf_i (b_1 \otimes b_2) &= \begin{cases} \kf_i b_1 \otimes b_2 &
    \text{if } \varphi_i(b_1) > \varepsilon_i(b_2),\\
    b_1 \otimes \kf_i b_2 & \text{if } \varphi_i(b_1) \le
    \varepsilon_i(b_2). \end{cases}
\end{align*}
Here we write $b_1 \otimes b_2$ for $(b_1,b_2) \in B_1 \times B_2$
and $b_1 \times 0 = 0 \times b_2 =0$.
\end{theo}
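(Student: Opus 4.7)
The plan is to reduce the statement to rank one and verify it by explicit computation using the $q$-analogue of the Clebsch--Gordan decomposition. Each of the six axioms in the definition of a crystal basis mentions only a single Kashiwara operator $\ke_i$ or $\kf_i$, so fixing $i \in I$ it suffices to consider the subalgebra of $U_q(\g)$ generated by $e_i$, $f_i$, $q^{\pm s_i h_i}$, which is isomorphic to $U_{q^{s_i}}(\sl_2)$. Restricting $M_1$ and $M_2$ to this subalgebra and invoking complete reducibility in $\mathcal{O}^q_{\text{int}}$, I may further assume $\g = \sl_2$ and $M_j$ irreducible, i.e.\ $M_1 = V_m$, $M_2 = V_n$ for some $m,n \in \Z_{\ge 0}$, whose crystal bases are described in Example~\ref{eg:B_n}.

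With this reduction in hand, I would invoke the quantum Clebsch--Gordan decomposition $V_m \otimes V_n \cong \bigoplus_{k=0}^{\min(m,n)} V_{m+n-2k}$ and write an explicit formula for the highest weight vector $u_k$ of each summand as a $\C_q$-linear combination of the tensors $v_j \otimes v_l$ with $j+l = m+n-2k$. Using the coproduct $\Delta(e) = e \otimes q^{h} + 1 \otimes e$, the equation $e \cdot u_k = 0$ can be solved recursively in $q$-binomial coefficients, and one checks that $u_k$ admits a distinguished leading term modulo $q^{-1/2}(L_1 \otimes_A L_2)$ while the other contributions all carry strictly positive powers of $q^{-1/2}$ and hence vanish in the quotient. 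Applying the divided powers $f^{(r)}$ to each $u_k$ and expanding $\Delta(f^{(r)})$ yields a $\C_q$-basis of $V_m \otimes V_n$ adapted to the Clebsch--Gordan decomposition.

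The concluding step is to show that this adapted basis, reduced modulo $q^{-1/2}$, coincides up to nonzero scalars with $B_1 \times B_2$, and that $\ke_i$, $\kf_i$ act on $b_1 \otimes b_2$ exactly by the rule in the statement. The case distinction on $\varphi_i(b_1)$ versus $\varepsilon_i(b_2)$ emerges naturally as the criterion determining which Clebsch--Gordan summand $V_{m+n-2k}$ contains $b_1 \otimes b_2$ modulo $q^{-1/2}$: roughly, once $\varphi_i(b_1) \ge \varepsilon_i(b_2)$, lowering by $\kf_i$ inside $V_{m+n-2k}$ moves the ``left'' factor, and otherwise it moves the ``right'' factor. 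The main obstacle will be the bookkeeping of $q^{s_i}$-binomial coefficients produced by expanding $\Delta(f^{(r)})$: one must verify that the cross terms which would contradict the tensor product rule all carry strictly negative powers of $q^{-1/2}$ and therefore disappear in $L/q^{-1/2}L$, while the single surviving term in each equivalence class recovers the prescribed formula. Once this delicate comparison is carried out, axioms (1)--(6) for the pair $(L_1 \otimes_A L_2, B_1 \times B_2)$ follow from the analogous properties of the crystal bases of the individual summands $V_{m+n-2k}$.
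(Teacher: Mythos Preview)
The paper does not actually prove this theorem: it is stated with references to \cite[Theorem~4.4.1]{HK} and \cite[Proposition~14.1.14]{CP95} and then used as a black box. So there is no ``paper's own proof'' to compare against.

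That said, your outline is essentially the standard argument found in the cited references. The reduction to rank one is correct and is the key structural step: each axiom of a crystal basis involves a single index $i$, so one restricts to the copy of $U_{q^{s_i}}(\sl_2)$ and invokes complete reducibility. One point you glide over is that the reduction to $M_j$ irreducible requires knowing that a crystal basis of a direct sum decomposes compatibly into crystal bases of the summands; this is itself a nontrivial consequence of Kashiwara's existence and uniqueness theorem, and you should flag it explicitly rather than absorb it into ``complete reducibility''. After that, the explicit Clebsch--Gordan computation you sketch is exactly what is done in \cite{HK}: one writes down the highest weight vectors of each $V_{m+n-2k}$ inside $V_m \otimes V_n$, applies divided powers of $f$, and tracks which monomial $v_j \otimes v_l$ survives modulo $q^{-1/2}L$. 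There is a small slip in your wording---at one point you say the unwanted terms carry ``strictly positive powers of $q^{-1/2}$'' and later ``strictly negative powers of $q^{-1/2}$''; you mean the former throughout (i.e.\ coefficients in $q^{-1/2}A$, which vanish in $L/q^{-1/2}L$). With that corrected, the bookkeeping is tedious but routine, and your proposal is a faithful summary of the standard proof.
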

We write $B_1 \otimes B_2$ for the crystal graph $B_1 \times B_2$ of
$M_1 \otimes M_2$ with crystal operators defined by the formulas in
Theorem~\ref{thm:tensor-product-rule}.  Note that even though we use
a different coproduct than in \cite{HK}, the tensor product rule
remains the same as seen in \cite[Proposition~14.1.14]{CP95}.


\subsection{The braiding in the quantum group}
\label{sec:braiding-quantum-group}

The category of representations of $U(\g)$, the universal enveloping
algebra of a symmetrizable Kac-Moody algebra $\g$, is a symmetric
monoidal category with braiding given by
\[
    \sigma_{U,V} :U \otimes V \to V \otimes U,\quad \sigma(u \otimes v)
    = \flip (u \otimes v) \stackrel{\text{def}}{=}
    v \otimes u \quad \text{for} \quad u \in U,\ v \in V.
\]
However, the analogous map is not a morphism in the category of
representations of $U_q(\g)$ and this category is not a symmetric
monoidal category.  However, it is a braided monoidal category with
a braiding constructed as follows.  The \emph{$R$-matrix} is an
invertible element in a certain completed tensor product $U_q(\g)
\widehat{\otimes} U_q(\g)$ (see \cite{CP95,KT07}).  It defines a map
$U \otimes V \to U \otimes V$ for any representations $U$ and $V$ of
$U_q(\g)$. The map given by
\[
    \sigma_{U,V} : U \otimes V \to V \otimes U,\quad \sigma_{U,V} = \flip
    \circ R
\]
for representations $U$ and $V$ of $U_q(\g)$ is a braiding.

As an example, consider the representation $V_1 \otimes V_1$ of
$U_q(\sl_2)$.  In the basis $S_1 = \{v_1 \otimes v_1, v_{-1} \otimes
v_1, v_1 \otimes v_{-1}, v_{-1} \otimes v_{-1} \}$, the $R$-matrix
is given by (see \cite[Example~6.4.12]{CP95})
\[
    R = q^{-1/2} \begin{pmatrix} q & 0 & 0 & 0 \\ 0 & 1 & 0 & 0 \\ 0
    & q - q^{-1} & 1 & 0 \\ 0 & 0 & 0 & q \end{pmatrix}.
\]
Note that $R|_{q=1} = \id$ and so in the classical limit, the
braiding becomes the map $\flip$.  In the basis $S_1$, we have
\begin{equation} \label{eq:flipR-S1}
    \flip = \begin{pmatrix} 1 & 0 & 0 & 0 \\ 0 & 0 & 1 & 0 \\ 0 & 1
    & 0 & 0 \\ 0 & 0 & 0 & 1 \end{pmatrix}, \quad \text{and so}
    \quad
    \flip \circ R = q^{-1/2} \begin{pmatrix} q&0&0&0 \\ 0& q-q^{-1}
    & 1 & 0 \\ 0&1&0&0 \\ 0&0&0&q \end{pmatrix}.
\end{equation}
Now consider the basis
\[
S_2 = \{v_1 \otimes v_1, a, b, v_{-1} \otimes v_{-1}\},\quad a =
v_{-1} \otimes v_1 - q v_1 \otimes v_{-1},\quad b = v_{-1} \otimes
v_1 + q^{-1}v_1 \otimes v_{-1}.
\]
Note that
\[
    ea = fa = 0,\quad f(v_1 \otimes v_1) = b.
\]
Thus $S_2$ is a basis of $V_1 \otimes V_1$ compatible with the
decomposition $V_1 \otimes V_1 \cong V_0 \oplus V_2$. In the basis
$S_2$, we have
\begin{equation} \label{eq:flipR-S2}
    \flip \circ R = \begin{pmatrix} q^{1/2} & 0 & 0 & 0 \\ 0 &
    -q^{-3/2} & 0 & 0 \\ 0&0& q^{1/2} & 0 \\ 0&0&0& q^{1/2}
    \end{pmatrix}.
\end{equation}
From this form, we easily see that $\flip \circ R$ is an isomorphism
of $U_q(\sl_2)$-modules $V_1 \otimes V_1 \to V_1 \otimes V_1$.  It
acts as multiplication by $q^{1/2}$ on the summand $V_2$ and by
$-q^{-3/2}$ on the summand $V_0$.

Now, from Example~\ref{eg:B_n} and the tensor product rule
(Theorem~\ref{thm:tensor-product-rule}), we see that the crystal
basis of $V_1 \otimes V_1$ is given by
\begin{align*}
    L &= \Span_A \{v_1 \otimes v_1, v_{-1} \otimes v_1, v_1 \otimes v_{-1},
    v_{-1} \otimes v_{-1}\}, \quad \text{and} \\
    B &= \{b_1 \otimes b_1, b_{-1} \otimes b_1, b_1 \times b_{-1},
    b_{-1} \otimes b_{-1}\}.
\end{align*}
From the matrix of $\flip \circ R$ in the basis $S_1$ given in
\eqref{eq:flipR-S1}, we see that it does not preserve the crystal
lattice $L$ since it involves positive powers of $q$. Furthermore,
there is no $\C_q$-multiple of $\flip \circ R$ which preserves $L$
and induces an isomorphism of $L/q^{-1/2}L$.  To see this, note from
\eqref{eq:flipR-S1} that in order for $g(q) \flip \circ R$, with
$g(q) \in \C_q$, to preserve the crystal lattice, we would need
$q^{1/2}g(q) \in A$ and thus $q^{-1/2}g(q) \in q^{-1}A \subseteq
q^{-1/2}A$. However, we would then have
\[
    g(q) \flip \circ R(v_1 \otimes v_{-1}) = q^{-1/2} g(q) v_{-1} \otimes
    v_1 \equiv 0 \mod q^{-1/2} L
\]
and so $g(q) \flip \circ R$ would not induce an isomorphism of
$L/q^{-1/2}L$.  Therefore, we see that the braiding coming from the
$R$-matrix does not pass to the $q \to \infty$ limit.  That is, it
does not induce a braiding on the crystal $B_1 \otimes B_1$.

It turns out that the above phenomenon is unavoidable.  That is, no
braiding on the category of representations of a quantum group
passes to the $q \to \infty$ limit.  In fact, we have the following
even stronger results.

\begin{lem} \label{lem:sl2-not-braided}
The category of $\mathfrak{sl}_2$-crystals cannot be given the
structure of a braided monoidal category.
\end{lem}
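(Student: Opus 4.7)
The plan is to derive a contradiction from the hexagon and naturality axioms by exploiting the asymmetric position of the weight-zero elements in $B_1 \otimes B_1$. First, I compute the decomposition of $B_1 \otimes B_1$ using the tensor product rule (Theorem~\ref{thm:tensor-product-rule}). The element $b_1 \otimes b_{-1}$ has weight zero and is killed by both $\ke$ and $\kf$: indeed $\varphi(b_1) = 1 \ge 1 = \varepsilon(b_{-1})$ forces $\ke(b_1 \otimes b_{-1}) = \ke b_1 \otimes b_{-1} = 0$, and similarly $\kf(b_1 \otimes b_{-1}) = b_1 \otimes \kf b_{-1} = 0$. Hence $\{b_1 \otimes b_{-1}\}$ is a singleton connected component isomorphic to $B_0$, while the remaining three elements form a $B_2$-component with highest-weight vector $b_1 \otimes b_1$, giving $B_1 \otimes B_1 \cong B_0 \sqcup B_2$.

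Since $B_0$ and $B_2$ are non-isomorphic irreducible crystals each occurring with multiplicity one, $B_1 \otimes B_1$ admits no nontrivial crystal automorphisms, so any putative braiding must satisfy $\sigma_{B_1, B_1} = \id$. The category of $\sl_2$-crystals is strict monoidal, so the hexagon axiom applied with $U = V = W = B_1$ forces
\[
\sigma_{B_1 \otimes B_1, B_1} = (\sigma_{B_1, B_1} \otimes \id_{B_1}) \circ (\id_{B_1} \otimes \sigma_{B_1, B_1}) = \id_{B_1^{\otimes 3}}.
\]

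To conclude, let $\iota : B_0 \hookrightarrow B_1 \otimes B_1$ be the inclusion of the singleton summand, sending the unique element $c \in B_0$ to $b_1 \otimes b_{-1}$. Naturality of the braiding applied to $\iota \otimes \id_{B_1}$ requires
\[
\sigma_{B_1 \otimes B_1, B_1} \circ (\iota \otimes \id_{B_1}) = (\id_{B_1} \otimes \iota) \circ \sigma_{B_0, B_1}.
\]
Since $B_0 \otimes B_1 \cong B_1 \cong B_1 \otimes B_0$ is an irreducible crystal with no nontrivial automorphisms, $\sigma_{B_0, B_1}$ is forced to be the bijection $c \otimes b_i \mapsto b_i \otimes c$. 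Evaluating at $c \otimes b_1$, the left-hand side equals $b_1 \otimes b_{-1} \otimes b_1$ (using $\sigma_{B_1 \otimes B_1, B_1} = \id$), while the right-hand side equals $b_1 \otimes b_1 \otimes b_{-1}$. These are distinct elements of $B_1^{\otimes 3}$, the desired contradiction. The only subtle step is recognizing that the two weight-zero elements $b_1 \otimes b_{-1}$ and $b_{-1} \otimes b_1$ of $B_1 \otimes B_1$ lie in different connected components; once this asymmetry built into the tensor product rule is observed, the rest is a direct unwinding of the naturality square.
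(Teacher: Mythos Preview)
Your proof is correct and follows the same overall strategy as the paper: show that $\sigma_{B_1,B_1}$ is forced to be the identity, use a hexagon axiom to conclude that a braiding on a triple product is the identity, and then use naturality with respect to an inclusion of a summand of $B_1\otimes B_1$ to show that this same braiding must move an element. The difference lies in the choice of summand. The paper embeds $j:B_2\hookrightarrow B_1\otimes B_1$ and must first work out the decompositions of $B_1\otimes B_2$ and $B_2\otimes B_1$ in order to pin down $\sigma_{B_1,B_2}$ on the relevant highest-weight element; you instead embed the singleton $\iota:B_0\hookrightarrow B_1\otimes B_1$, so that $\sigma_{B_0,B_1}$ is determined immediately by weight considerations. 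Your route is therefore a little shorter. The paper's choice, on the other hand, is the one that transports verbatim to the general case in Proposition~\ref{prop:crystals-not-braided}, where one uses $B_{2\omega_1}\hookrightarrow B_{\omega_1}\otimes B_{\omega_1}$ and there is no trivial summand available to play the role of your $B_0$.
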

\begin{proof}
We prove the result by contradiction.  Suppose the category of
$\mathfrak{sl}_2$-crystals is a braided monoidal category with
braiding $\sigma$. Consider the crystal $B_1$.  It has crystal graph
\[
    B_1\ :\ b_1 \longrightarrow b_{-1}.
\]
The crystal graph of the tensor product $B_1 \otimes B_1$ has two
connected components:
\begin{gather*}
    b_1 \otimes b_1 \longrightarrow b_{-1} \otimes b_1
    \longrightarrow b_{-1} \otimes b_{-1} \quad \cong B_2\\
    b_1 \otimes b_{-1} \quad \cong B_0
\end{gather*}
Since $\sigma_{B_1,B_1}$ is an crystal isomorphism, we see from the
above that it must act as the identity.  Therefore
\begin{equation} \label{eq:sl2-crystal-identity}
    (\id_{B_1} \otimes \sigma_{B_1,B_1}) \circ (\sigma_{B_1,B_1}
    \otimes \id_{B_1}) = \id_{B_1 \otimes B_1 \otimes B_1}.
\end{equation}
Now, the graph of the crystal $B_1 \otimes B_2$ has two connected components:
\begin{gather*}
    b_1 \otimes b_2 \longrightarrow b_{-1} \otimes b_2
    \longrightarrow b_{-1} \otimes b_0 \longrightarrow
    b_{-1} \otimes b_{-2} \quad \cong B_3\\
    b_1 \otimes b_0 \longrightarrow b_1 \otimes b_{-2} \quad \cong B_1
\end{gather*}
The graph of the crystal $B_2 \otimes B_1$ also has two connected components:
\begin{gather*}
    b_2 \otimes b_1 \longrightarrow b_0 \otimes b_1
    \longrightarrow b_{-2} \otimes b_1 \longrightarrow
    b_{-2} \otimes b_{-1} \quad \cong B_3\\
    b_2 \otimes b_{-1} \longrightarrow b_0 \otimes b_{-1} \quad
    \cong B_1
\end{gather*}
Since $\sigma_{B_1,B_2}$ is a crystal isomorphism, we must have
$\sigma_{B_1,B_2}(b_1 \otimes b_0) = b_2 \otimes b_{-1}$.

Now, consider the inclusion of crystals $j: B_2 \hookrightarrow B_1
\otimes B_1$ given by
\[
    j(b_2) = b_1 \otimes b_1,\quad j(b_0) = b_{-1} \otimes b_1,\quad
    j(b_{-2}) = b_{-1} \otimes b_{-1}.
\]
By the naturality of the braiding, the following diagram commutes:
\[
    \xymatrix{ B_1 \otimes B_2 \ar[rr]^{\id_{B_1} \otimes j} \ar[d]^{\sigma_{B_1,B_2}}
    & & B_1 \otimes B_1 \otimes B_1 \ar[d]^{\sigma_{B_1,B_1 \otimes B_1}} \\
    B_2 \otimes B_1 \ar[rr]^{j \otimes \id_{B_1}} & & B_1 \otimes B_1 \otimes B_1}
\]
We therefore have
\begin{multline*}
    \sigma_{B_1,B_1 \otimes B_1} (b_1 \otimes b_{-1} \otimes b_1)
    = \sigma_{B_1,B_1 \otimes B_1} \circ (\id_{B_1} \otimes j)(b_1 \otimes b_0)
    = (j \otimes \id_{B_1}) \circ \sigma_{B_1,B_2} (b_1 \otimes b_0) \\
    = (j \otimes \id_{B_1}) (b_2 \otimes b_{-1}) = b_1 \otimes b_1 \otimes b_{-1}.
\end{multline*}
Comparing to \eqref{eq:sl2-crystal-identity}, we see that
\[
    \sigma_{B_1,B_1 \otimes B_1} \ne (\id_{B_1} \otimes
    \sigma_{B_1,B_1}) \circ (\sigma_{B_1,B_1}
    \otimes \id_{B_1}),
\]
contradicting the fact that $\sigma$ is a braiding.
\end{proof}

An alternative proof of Lemma~\ref{lem:sl2-not-braided} was given in
\cite{HK06}.  We can use the fact that $\g$-crystals, for $\g$ a
symmetrizable Kac-Moody algebra, can be restricted to
$\sl_2$-crystals to generalize this result.

\begin{prop} \label{prop:crystals-not-braided}
For any symmetrizable Kac-Moody algebra $\g$, the category of
$\g$-crystals cannot be given the structure of a braided monoidal
category.
\end{prop}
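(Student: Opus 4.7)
The strategy is to reduce to Lemma~\ref{lem:sl2-not-braided} by exhibiting, for any fixed $i \in I$, a copy of the $\sl_2$-crystal situation inside the category of $\g$-crystals. Concretely, I would work with the fundamental crystal $B := B_{\omega_i}$ and its highest-weight element $b = b_{\omega_i}$; set $\bar b = \kf_i b$. The tensor product rule (Theorem~\ref{thm:tensor-product-rule}), applied only to the color $i$, shows that the $i$-colored connected component of $B \otimes B$ through $b \otimes b$ is $\{b \otimes b,\ \bar b \otimes b,\ \bar b \otimes \bar b\}$ (isomorphic to $B_2$ as an $\sl_2$-crystal), while $b \otimes \bar b$ is an $i$-isolated vertex (isomorphic to $B_0$). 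This mirrors the $\sl_2$ decomposition $B_1 \otimes B_1 \cong B_2 \sqcup B_0$ used in the lemma.

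Assume for contradiction that $\sigma$ is a braiding on $\g$-crystals. The first step is to pin down $\sigma_{B,B}$ on these four elements. Since $\sigma_{B,B}$ is a $\g$-crystal isomorphism and $b \otimes b$ is the unique $\g$-highest-weight vector of weight $2\omega_i$ in $B \otimes B$, one gets $\sigma_{B,B}(b \otimes b) = b \otimes b$; equivariance under $\kf_i$ then forces $\sigma_{B,B}$ to fix $\bar b \otimes b$ and $\bar b \otimes \bar b$. A short weight count shows that $b \otimes \bar b$ is likewise the unique $\g$-highest-weight vector of weight $2\omega_i - \alpha_i$ in $B \otimes B$, hence is also fixed.

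Now I would run the argument of Lemma~\ref{lem:sl2-not-braided} verbatim: the hexagon axiom, combined with these fixed-point calculations, gives $\sigma_{B,\, B \otimes B}(b \otimes b \otimes \bar b) = b \otimes b \otimes \bar b$. On the other hand, since $2\omega_i - \alpha_i$ is a dominant integral weight (as $\alpha_j(h_i) \le 0$ for $j \ne i$), there is a $\g$-crystal inclusion $j: B_{2\omega_i - \alpha_i} \hookrightarrow B \otimes B$ sending the highest element $c$ to $b \otimes \bar b$. Naturality of $\sigma$ applied to $\id_B \otimes j$, together with the observation that $c \otimes b$ is the unique $\g$-highest-weight vector of weight $3\omega_i - \alpha_i$ in $B_{2\omega_i - \alpha_i} \otimes B$ (so $\sigma_{B,\, B_{2\omega_i-\alpha_i}}(b \otimes c) = c \otimes b$ is forced), gives the alternative value $\sigma_{B,\, B \otimes B}(b \otimes b \otimes \bar b) = b \otimes \bar b \otimes b$. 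These two values disagree, which is the desired contradiction.

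The main obstacle is the uniqueness bookkeeping for $\g$-highest-weight vectors of weights $2\omega_i$, $2\omega_i - \alpha_i$, and $3\omega_i - \alpha_i$ in the relevant tensor products. Each such uniqueness claim reduces to a direct application of the tensor product rule, using that the weight space $B_{\omega_i}^{\omega_i - \alpha_i}$ is one-dimensional (spanned by $\bar b$) and that weights in $B_{\omega_i}$ and $B_{2\omega_i - \alpha_i}$ lie in $\omega_i - Q_+$ and $(2\omega_i - \alpha_i) - Q_+$ respectively, so the weight constraints pin down a unique tensor factorization. Equivalently, one can phrase the whole argument at the level of the monoidal restriction functor $F_i$ from $\g$-crystals to $\sl_2$-crystals that forgets all edges except those colored $i$ (monoidal because the tensor product rule depends on only one color at a time); the proof of Lemma~\ref{lem:sl2-not-braided} then transports formally through $F_i$, the inclusion $j$ playing the role of the $\sl_2$-embedding used there.
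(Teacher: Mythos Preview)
Your argument is correct and follows the same architecture as the paper's proof: work inside $B_{\omega_i}^{\otimes 2}$ and $B_{\omega_i}^{\otimes 3}$, pin down $\sigma_{B,B}$ on the four elements $b\otimes b$, $\bar b\otimes b$, $\bar b\otimes\bar b$, $b\otimes\bar b$ by weight and highest-weight uniqueness, and then play the hexagon axiom against naturality along an inclusion of a connected subcrystal.

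The one genuine difference is the choice of inclusion. The paper uses $j:B_{2\omega_1}\hookrightarrow B_{\omega_1}^{\otimes 2}$ (highest element mapping to $b\otimes b$) and tracks the test element $b\otimes\bar b\otimes b$; this forces it to determine $\sigma_{B_{\omega_1},B_{2\omega_1}}$ on the \emph{non}-highest element $b_{\omega_1}\otimes\kf_1 b_{2\omega_1}$, which it does by analyzing the $i$-colored $\sl_2$-substrings of $B_{\omega_1}\otimes B_{2\omega_1}$ and $B_{2\omega_1}\otimes B_{\omega_1}$. You instead use $j:B_{2\omega_i-\alpha_i}\hookrightarrow B_{\omega_i}^{\otimes 2}$ (highest element $c\mapsto b\otimes\bar b$) and track $b\otimes b\otimes\bar b$; your auxiliary commutor $\sigma_{B,\,B_{2\omega_i-\alpha_i}}$ then only needs to be evaluated on the highest-weight element $b\otimes c$, where uniqueness is immediate. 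This buys you a cleaner endgame at the small cost of checking that $2\omega_i-\alpha_i$ is dominant and that $b\otimes\bar b$ is $\g$-highest (both of which you note and which are straightforward). Either route yields the same contradiction.
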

\begin{proof}
We prove the result by contradiction.  Suppose the category of
$\g$-crystals was a braided monoidal category with braiding $\sigma$
for some symmetrizable Kac-Moody algebra $\g$.  Let $\alpha_1$ and
$\omega_1$ be a simple root and fundamental weight (respectively)
corresponding to some vertex in the Dynkin diagram of $\g$.  The
restriction of a $\g$-crystal to the color 1 yields an
$\sl_2$-crystal.  More precisely, one forgets the operators $\ke_i$,
$\kf_i$, $\varphi_i$ and $\varepsilon_i$ for $i \ne 1$ and projects
the map $\wt$ to the one-dimensional sublattice $\Z \omega_1
\subseteq P$.  In general, even if the original $\g$-crystal was
connected (i.e. irreducible), the induced $\sl_2$-crystal will not
be. However, any morphism of $\g$-crystals induces a morphism of the
restricted $\sl_2$-crystals.

Consider the $\g$-crystal $B_{k\omega_1}$, $k \ge 1$, corresponding
to the irreducible $U_q(\g)$-module of highest weight $k \omega_1$.
If we restrict this to an $\sl_2$-crystal, the connected component
of the $\sl_2$-crystal graph containing the highest weight element
$b_{k \omega_1}$ is isomorphic to the $\sl_2$-crystal $B_k$. Now,
since $b_{\omega_1} \otimes b_{\omega_1}$ is the unique element of
$B_{\omega_1} \otimes B_{\omega_1}$ of weight $2 \omega_1$, we have
\[
    \sigma_{B_{\omega_1}, B_{\omega_1}} (b_{\omega_1} \otimes
    b_{\omega_1}) = b_{\omega_1} \otimes b_{\omega_1}.
\]
The connected $\sl_2$-subcrystal containing the element
$b_{\omega_1} \otimes b_{\omega_1}$ is
\begin{equation} \label{eq:Bomega1-Bomega1-highest-piece}
    b_{\omega_1} \otimes b_{\omega_1} \longrightarrow \kf_1 b_{\omega_1}
    \otimes b_{\omega_1} \longrightarrow \kf_1 b_{\omega_1} \otimes
    \kf_1 b_{\omega_1} \quad \cong B_2
\end{equation}
(as in the proof of Lemma~\ref{lem:sl2-not-braided}).  Thus, since
$\sigma_{B_{\omega_1} \otimes B_{\omega_1}}$ is a morphism of
$\sl_2$-crystals, we must have
\[
    \sigma_{B_{\omega_1} \otimes B_{\omega_1}} (\kf_1 b_{\omega_1}
    \otimes b_{\omega_1}) = \kf_1 b_{\omega_1} \otimes b_{\omega_1}.
\]
Now, the only element of $B_{\omega_1} \otimes B_{\omega_1}$ of
weight $2 \omega_1 - \alpha_1$ not contained in the connected
$\sl_2$-subcrystal mentioned above is $b_{\omega_1} \otimes \kf_1
b_{\omega_1}$. Therefore, we must also have
\[
    \sigma_{B_{\omega_1}, B_{\omega_1}} (b_{\omega_1} \otimes
    \kf_1 b_{\omega_1}) = b_{\omega_1} \otimes \kf_1 b_{\omega_1}.
\]
Thus,
\begin{multline} \label{eq:g-crystal-identity}
    (\id_{B_{\omega_1}} \otimes \sigma_{B_{\omega_1},
    B_{\omega_1}}) \circ (\sigma_{B_{\omega_1},B_{\omega_1}}
    \otimes \id_{B_{\omega_1}})
    (b_{\omega_1} \otimes \kf_1 b_{\omega_1} \otimes
    b_{\omega_1}) \\
    = (\id_{B_{\omega_1}} \otimes \sigma_{B_{\omega_1},
    B_{\omega_1}}) (b_{\omega_1} \otimes \kf_1 b_{\omega_1}
    \otimes b_{\omega_1}) = b_{\omega_1} \otimes \kf_1 b_{\omega_1}
    \otimes b_{\omega_1}.
\end{multline}

Now, the connected $\sl_2$-subcrystal of $B_{\omega_1} \otimes B_{2
\omega_1}$ containing the element $b_{\omega_1} \otimes b_{2
\omega_1}$ is
\[
    b_{\omega_1} \otimes b_{2 \omega_1} \longrightarrow \kf_1
    b_{\omega_1} \otimes b_{2 \omega_1} \longrightarrow \kf_1
    b_{\omega_1} \otimes \kf_1 b_{2 \omega_1} \longrightarrow \kf_1
    b_{\omega_1} \otimes \kf_1^2 b_{2 \omega_1} \quad \cong B_3,
\]
and the connected $\sl_2$-subcrystal containing the element
$b_{\omega_1} \otimes \kf_1 b_{2 \omega_1}$ is
\[
    b_{\omega_1} \otimes \kf_1 b_{2 \omega_1} \longrightarrow
    b_{\omega_1} \otimes \kf_1^2 b_{2 \omega_1} \quad \cong B_1
\]
(as in the proof of Lemma~\ref{lem:sl2-not-braided}).  Similarly, we
have the following $\sl_2$-subcrystals of $B_{2 \omega_1} \otimes
B_{\omega_1}$:
\begin{gather*}
    b_{2 \omega_1} \otimes b_{\omega_1} \longrightarrow \kf_1 b_{2
    \omega_1} \otimes b_{\omega_1} \longrightarrow \kf_1^2 b_{2
    \omega_1} \otimes b_{\omega_1} \longrightarrow \kf_1^2 b_{2
    \omega_1} \otimes \kf_1 b_{\omega_1} \quad \cong B_3, \\
    b_{2 \omega_1} \otimes \kf_1 b_{\omega_1} \longrightarrow \kf_1
    b_{2 \omega_1} \otimes \kf_1 b_{\omega_1} \quad \cong B_1.
\end{gather*}
Now, since $b_{\omega_1} \otimes b_{2 \omega_1}$ and $b_{2 \omega_1}
\otimes b_{\omega_1}$ are the unique elements of $B_{\omega_1}
\otimes B_{2 \omega_1}$ and $B_{2 \omega_1} \otimes B_{\omega_1}$
(respectively) of weight $3 \omega_1$, we must have
\[
    \sigma_{B_{\omega_1}, B_{2 \omega_1}} (b_{\omega_1} \otimes b_{2
    \omega_1}) = b_{2 \omega_1} \otimes b_{\omega_1}.
\]
Also, since $b_{\omega_1} \otimes \kf_1 b_{2 \omega_1}$ and $b_{2
\omega_1} \otimes \kf_1 b_{\omega_1}$ are the only elements of
$B_{\omega_1} \otimes B_{2 \omega_1}$ and $B_{2 \omega_1} \otimes
B_{\omega_1}$ (respectively) of weight $3 \omega_1 - \alpha_1$ not
contained in the connected $\sl_2$-subcrystal containing
$b_{\omega_1} \otimes b_{2 \omega_1}$ and $b_{2 \omega_1} \otimes
b_{\omega_1}$ (respectively), we must have
\[
    \sigma_{B_{\omega_1}, B_{2 \omega_1}} (b_{\omega_1} \otimes \kf_1 b_{2
    \omega_1}) = b_{2 \omega_1} \otimes \kf_1 b_{\omega_1}.
\]

Now, consider the inclusion of $\g$-crystals $j : B_{2 \omega_1}
\hookrightarrow B_{\omega_1} \otimes B_{\omega_1}$ determined by $j
(b_{2 \omega_1}) = b_{\omega_1} \otimes b_{\omega_1}$.  Restricting
to the connected $\sl_2$-crystals containing the elements $b_{2
\omega_1}$ and $b_{\omega_1} \otimes b_{\omega_1}$, we see from
\eqref{eq:Bomega1-Bomega1-highest-piece} that
\[
    j(\kf_1 b_{2 \omega_1}) = \kf_1 b_{\omega_1} \otimes
    b_{\omega_1},\quad j(\kf_1^2 b_{2 \omega_1}) = \kf_1
    b_{\omega_1} \otimes \kf_1 b_{\omega_1}.
\]

By the naturality of the braiding $\sigma$, the following diagram is
commutative:
\[
    \xymatrix{ B_{\omega_1} \otimes B_{2 \omega_1} \ar[rr]^{\id_{B_{\omega_1}}
    \otimes j} \ar[d]^{\sigma_{B_{\omega_1},B_{2 \omega_1}}}
    & & B_{\omega_1} \otimes B_{\omega_1} \otimes B_{\omega_1}
    \ar[d]^{\sigma_{B_{\omega_1},B_{\omega_1} \otimes B_{\omega_1}}} \\
    B_{2 \omega_1} \otimes B_{\omega_1} \ar[rr]^{j \otimes
    \id_{B_{\omega_1}}} & & B_{\omega_1} \otimes B_{\omega_1} \otimes
    B_{\omega_1}}
\]
Therefore
\begin{align*}
    \sigma_{B_{\omega_1}, B_{\omega_1} \otimes B_{\omega_1}} (b_{\omega_1}
    \otimes \kf_1 b_{\omega_1} \otimes b_{\omega_1}) &=
    \sigma_{B_{\omega_1}, B_{\omega_1} \otimes B_{\omega_1}} \circ
    (\id_{B_{\omega_1}} \otimes j) (b_{\omega_1} \otimes \kf_1 b_{2
    \omega_1}) \\
    &= (j \otimes \id_{B_{\omega_1}}) \circ \sigma_{B_{\omega_1},
    B_{2 \omega_1}} (b_{\omega_1} \otimes \kf_1 b_{2 \omega_1}) \\
    &= (j \otimes \id_{B_{\omega_1}}) (b_{2 \omega_1} \otimes \kf_1
    b_{\omega_1}) \\
    &= b_{\omega_1} \otimes b_{\omega_1} \otimes \kf_1 b_{\omega_1}.
\end{align*}
Comparing this to \eqref{eq:g-crystal-identity} we see that
\[
    \sigma_{B_{\omega_1}, B_{\omega_1} \otimes
    B_{\omega_1}} \ne  (\id_{B_{\omega_1}} \otimes \sigma_{B_{\omega_1},
    B_{\omega_1}}) \circ (\sigma_{B_{\omega_1},B_{\omega_1}}
    \otimes \id_{B_{\omega_1}}).
\]
This contradicts the fact that $\sigma$ is a braiding.
\end{proof}


\section{Crystals and coboundary categories}
\label{sec:crystal-commutor}

In this section, we discuss how the categories of $U_q(\g)$-modules
and $\g$-crystals can be given the structure of a coboundary
category.  For the case of $\g$-crystals, we mention several
different constructions and note the relationship between them.


\subsection{Drinfel\cprime d's unitarization}
\label{sec:Drinfeld-unitarization}

In \cite{Dri90}, Drinfel\cprime d defined the \emph{unitarized
$R$-matrix}
\[
    \bar R = R(R^{\text{op}} R)^{-1/2},
\]
where $R^{\text{op}} = \flip (R)$ (as an operator on $M_1 \otimes
M_2$, $R^{\text{op}}$ acts as $\flip \circ R \circ \flip$) and the
square root is taken with respect to a certain filtration on the
completed tensor product $U_q(\g) \hat \otimes U_q(\g)$.  He then
showed that $\flip \circ \bar R$ is a cactus commutor and so endows
the category of $U_q(\g)$-modules with the structure of a coboundary
category (see the comment after the proof of Proposition~3.3 in
\cite{Dri90}). That is, it satisfies the conditions of
Definition~\ref{def:coboundary-cat}.

Consider the representation $V_1 \otimes V_1$ of
$U_q(\mathfrak{sl}_2)$. In Section~\ref{sec:braiding-quantum-group},
we described the action of the $R$-matrix on this representation in
two different bases $S_1$ and $S_2$. It follows from
\eqref{eq:flipR-S2} that in the basis $S_2$,
\[
    R^{\text{op}} R = (\flip \circ R)^2 = \begin{pmatrix} q & 0 & 0 & 0 \\ 0 & q^{-3} & 0 & 0
    \\ 0 & 0 & q & 0 \\ 0 & 0 & 0 & q \end{pmatrix}.
\]
Therefore, we can take the (inverse of the) square root
\[
    (R^{\text{op}}R)^{-1/2} = q^{-1/2} \begin{pmatrix} 1 & 0 & 0 & 0 \\ 0 &
    q^2 & 0 & 0 \\ 0 & 0 & 1 & 0 \\ 0 & 0 & 0 & 1
    \end{pmatrix}.
\]
In the basis $S_1$, we then have
\begin{gather*}
     (R^{\text{op}}R)^{-1/2} = q^{-1/2} \begin{pmatrix} 1&0&0&0 \\ 0 &
     \frac{2q^2}{1+q^2} & \frac{q-q^3}{1+q^2} & 0 \\ 0 &
     \frac{q-q^3}{1+q^2} & \frac{1+q^4}{1+q^2} & 0 \\ 0 & 0 & 0 & 1
     \end{pmatrix},\\
     \bar R = R
     (R^{\text{op}}R)^{-1/2} =
     \begin{pmatrix} 1 & 0 & 0 & 0 \\ 0 & \frac{2q}{1+q^2} &
     \frac{1-q^2}{1+q^2} & 0 \\ 0 & \frac{q^2-1}{1+q^2} &
     \frac{2q}{1+q^2} & 0 \\ 0&0&0&1 \end{pmatrix}.
\end{gather*}
Therefore, in the basis $S_1$,
\begin{equation} \label{eq:commutors-S1}
    \flip \circ R = q^{-1/2} \begin{pmatrix} q&0&0&0 \\ 0& q-q^{-1}
    & 1 & 0 \\ 0&1&0&0 \\ 0&0&0&q \end{pmatrix},\quad \flip \circ
    \bar R = \begin{pmatrix} 1 & 0 & 0 & 0 \\ 0 & \frac{q^2-1}{1+q^2} &
     \frac{2q}{1+q^2} & 0 \\ 0 & \frac{2q}{1+q^2} &
     \frac{1-q^2}{1+q^2} & 0 \\ 0&0&0&1 \end{pmatrix}.
\end{equation}
And in the basis $S_2$,
\begin{equation} \label{eq:commutors-S2}
    \flip \circ R = \begin{pmatrix} q^{1/2} & 0 & 0 & 0 \\ 0 &
    -q^{-3/2} & 0 & 0 \\ 0&0& q^{1/2} & 0 \\ 0&0&0& q^{1/2}
    \end{pmatrix},\quad \flip \circ \bar R = \begin{pmatrix} 1&0&0&0
    \\ 0&-1&0&0 \\ 0&0&1&0 \\ 0&0&0&1 \end{pmatrix}.
\end{equation}
In the above, we have recalled the computation of $\flip \circ R$
from Section~\ref{sec:braiding-quantum-group} for the purposes of
comparison.

We note two important properties of $\flip \circ \bar R$. First of
all, we see from \eqref{eq:commutors-S1} that the matrix
coefficients in the basis $S_1$ of $\flip \circ \bar R$ lie in $A$,
the ring of rational functions in $\C_q$ that are regular at $q =
\infty$. Thus, $\flip \circ \bar R$ preserves the crystal lattice of
$V_1 \otimes V_1$.  In the $q \to \infty$ limit (more precisely,
when passing to the quotient $L/q^{-1/2} L$), we have (in the basis
$S_1$)
\[
    \flip \circ \bar R
    = \begin{pmatrix} 1&0&0&0 \\ 0&1&0&0 \\
    0&0&-1&0 \\ 0&0&0&1 \end{pmatrix} \mod q^{-1/2} L.
\]
Thus, $\flip \circ \bar R$ passes to the $q \to \infty$ limit and,
up to signs, induces an involution on the crystal $B_1 \otimes B_1$
(see Section~\ref{sec:various-commutors} for details). As noted in
Section~\ref{sec:braiding-quantum-group}, the same is not true of
$\flip \circ R$.

The second important property of $\flip \circ \bar R$ is that it is
a cactus commutor.  We see immediately from \eqref{eq:commutors-S2}
that $(\flip \circ \bar R)^2=\id$.  A straightforward (if somewhat
lengthy) computation shows that $\flip \circ \bar R$ also satisfies
the cactus relation (see \cite[\S3]{Dri90} for the proof in a more
general setting).

The unitarized $R$-matrix has shown up in several different places.
In \cite{BZ07} it arose naturally in the development of the theory
of braided symmetric and exterior algebras.  The reason for this is
that if one wants to have interesting symmetric or exterior
algebras, one needs an operator with eigenvalues of positive or
negative one.  Notice from \eqref{eq:commutors-S2} that while $\flip
\circ R$ does not have this property, the operator $\flip \circ \bar
R$ does. Essentially, in the basis $S_2$, the matrix for $\flip
\circ \bar R$ is obtained from the matrix for $\flip \circ R$ by
setting $q=1$.  Note that this does not imply that $\flip \circ \bar
R$ is an operator in the classical limit, merely that its matrix
coefficients in a certain basis do not involve powers of $q$.  The
$q \to \infty$ limit of the unitarized $R$-matrix also appeared
independently in the study of cactus commutors for crystals.  We
discuss this in the next two subsections.


\subsection{The crystal commutor using the Sch{\" u}tzenberger involution}

Let $\g$ be a simple complex Lie algebra and let $I$ denote the set
of vertices of the Dynkin graph of $\g$. If $w_0$ is the long
element in the Weyl group of $\g$, let $\theta : I \to I$ be the
involution such that $\alpha_{\theta(i)} = - w_0 \cdot \alpha_i$.
Define a crystal $\overline{B_\lambda}$ with underlying set
$\{\overline{b}\ |\ b \in B_\lambda\}$ and
\[
    \ke_i \cdot \overline{b} = \overline{\kf_{\theta(i)} \cdot b},\quad
    \kf_i \cdot \overline{b} = \overline{\ke_{\theta(i)} \cdot b},\quad
    \wt(\overline{b}) = w_0 \cdot \wt(b).
\]
There is a crystal isomorphism $\overline{B_\lambda} \cong
B_\lambda$.  We compose this isomorphism with the map of sets
$B_\lambda \to \overline{B_\lambda}$ given by $b \mapsto
\overline{b}$ and denote the resulting map by $\xi = \xi_{B_\lambda}
: B_\lambda \to B_\lambda$.  We call the map $\xi$ the \emph{Sch{\"
u}tzenberger involution}.  When $\g = \mathfrak{gl}_n$, there is a
realization of $B_\lambda$ using tableaux.  In this realization,
$\xi$ is the usual Sch{\" u}tzenberger involution on tableaux (see
\cite{LLT95}).

For an arbitrary $\g$-crystal $B$, write $B = \bigoplus_{i=1}^k
B_{\lambda_i}$.  This is a decomposition of $B$ into connected
components.  Then define $\xi_B : B \to B$ by $\xi_B =
\bigoplus_{i=1}^k \xi_{B_{\lambda_i}}$.  That is, we apply
$\xi_{B_{\lambda_i}}$ to each connected component $B_{\lambda_i}$.

For crystals $A$ and $B$, define
\[
    \sigma^S : A \otimes B \to B \otimes A,\quad \sigma^S(a \otimes
    b) = \xi_{B \otimes A} (\xi_B(b) \otimes \xi_A(a)).
\]

\begin{theo}[{\cite[{Proposition~3, Theorem~3}]{HK06}}]
We have
\begin{enumerate}
    \item $\sigma^S_{B,A} \circ \sigma^S_{A,B} = \id$, and
    \item $\sigma^S$ satisfies the cactus relation
        \eqref{eq:cactus-relation}.
\end{enumerate}
In other words, $\sigma^S$ endows the category of $\g$-crystals with
the structure of a coboundary category.
\end{theo}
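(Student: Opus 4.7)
The plan is to verify both assertions by direct computation from the definition $\sigma^S(a \otimes b) = \xi_{B \otimes A}(\xi_B(b) \otimes \xi_A(a))$ together with the three basic properties of the Sch\"utzenberger involution: (i) $\xi^2 = \id$ on each connected component; (ii) $\wt \circ \xi = w_0 \circ \wt$; and (iii) $\xi$ intertwines $\ke_i, \kf_i$ with $\kf_{\theta(i)}, \ke_{\theta(i)}$. It is convenient to factor $\sigma^S_{A,B} = \xi_{B \otimes A} \circ \tau_{A,B}$, where $\tau_{A,B} : A \otimes B \to B \otimes A$ is the set-theoretic bijection $\tau_{A,B}(a \otimes b) := \xi_B(b) \otimes \xi_A(a)$, which is manifestly involutive in the sense that $\tau_{B,A} \circ \tau_{A,B} = \id$.

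For part (1), expanding gives
\[
    \sigma^S_{B,A} \circ \sigma^S_{A,B} = \xi_{A \otimes B} \circ \tau_{B,A} \circ \xi_{B \otimes A} \circ \tau_{A,B},
\]
so involutivity is equivalent to the intertwining identity $\xi_{B \otimes A} \circ \tau_{A,B} = \tau_{A,B} \circ \xi_{A \otimes B}$ (once this holds, applying $\tau_{B,A} \circ \tau_{A,B} = \id$ and $\xi_{A\otimes B}^2 = \id$ collapses the composition). I would establish this identity through a connected-component analysis: for a highest-weight element $a \otimes b$ of a component of $A \otimes B$ of type $\lambda$, one checks that $\tau_{A,B}(a \otimes b) = \xi_B(b) \otimes \xi_A(a)$ has weight $w_0 \lambda$ and is the lowest-weight element of a (unique) connected component of $B \otimes A$ of type $\lambda$. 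Granting this key combinatorial lemma, both sides of the intertwining identity send highest-weight elements of each component of $A \otimes B$ to lowest-weight elements of the corresponding component of $B \otimes A$, hence agree by uniqueness within a component.

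For part (2), I would first observe that both legs of diagram \eqref{eq:cactus-relation} are morphisms of crystals $U \otimes V \otimes W \to W \otimes V \otimes U$, so it suffices to verify equality on a highest-weight element of each connected component. On such an element, expanding via the definition of $\sigma^c_{p,q}$ and the factorization $\sigma^S = \xi \circ \tau$ reduces each leg to a composite of $\xi$'s applied to various subproducts and a single threefold flip $u \otimes v \otimes w \mapsto \xi_W(w) \otimes \xi_V(v) \otimes \xi_U(u)$. The cancellations that match the two reductions use the combinatorial lemma of part (1) applied to the subproducts $U \otimes V$ and $V \otimes W$, together with the naturality of $\xi$ under inclusions of connected subcrystals $B_\mu \hookrightarrow U \otimes V$ and $B_\mu \hookrightarrow V \otimes W$.

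The main obstacle is the combinatorial lemma underlying part (1) — that $\tau_{A,B}$ sends the highest-weight element of each component of $A \otimes B$ to a lowest-weight element of a component of $B \otimes A$ of the same highest-weight type. Verifying this requires a careful argument with the tensor product rule (Theorem \ref{thm:tensor-product-rule}), tracking how the Kashiwara operators act on $\xi_B(b) \otimes \xi_A(a)$ in $B \otimes A$ versus on $a \otimes b$ in $A \otimes B$ and exploiting property (iii) to swap the roles of $\varphi_i$ and $\varepsilon_i$ after twisting by $\theta$. Once this lemma is secured, both the involutivity and the cactus relation follow by essentially formal manipulations of $\xi$ and $\tau$.
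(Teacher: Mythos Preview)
The paper does not give its own proof of this theorem: it is quoted from \cite{HK06} (Proposition~3 and Theorem~3 there) and stated without argument in this expository survey.  So there is no in-paper proof to compare against.

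That said, your outline tracks the original Henriques--Kamnitzer argument quite closely.  The factorisation $\sigma^S = \xi \circ \tau$ and the reduction of involutivity to the intertwining identity $\xi_{B\otimes A}\circ\tau_{A,B}=\tau_{A,B}\circ\xi_{A\otimes B}$ is exactly how \cite{HK06} proceeds.  The ``combinatorial lemma'' you isolate is their Lemma~2: the map $a\otimes b\mapsto \xi_B(b)\otimes\xi_A(a)$ is a crystal isomorphism $\overline{A\otimes B}\to B\otimes A$ (equivalently, it exchanges $\ke_i,\kf_i$ with $\kf_{\theta(i)},\ke_{\theta(i)}$).  They prove it directly from the tensor product rule by a case check, not by a highest/lowest-weight argument as you suggest; your formulation (highest-weight goes to lowest-weight of the same type) is a consequence of their lemma rather than an equivalent statement, and by itself does not obviously pin down $\tau$ on every element of the component, since $\tau$ is not a priori a crystal morphism.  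You would still need the full intertwining property to push the identity from highest-weight elements to the whole component.

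For part~(2), \cite{HK06} does not argue componentwise on highest-weight elements.  Instead they use the two-factor intertwining lemma iteratively to show that both legs of the square equal $\xi_{W\otimes V\otimes U}\circ\bigl(u\otimes v\otimes w\mapsto \xi_W(w)\otimes\xi_V(v)\otimes\xi_U(u)\bigr)$ as maps on the whole triple product.  Your sketch points in this direction but leaves the bookkeeping (which $\xi$'s cancel, and why naturality under subcrystal inclusions is the right tool) unspecified; in the original the cancellation is purely algebraic once the two-factor lemma is in hand, with no appeal to naturality under inclusions.
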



\subsection{The crystal commutor using the Kashiwara involution}
\label{sec:commutor-kash-involution}

Let $\g$ be a symmetrizable Kac-Moody algebra and let $B_\infty$ be
the $\g$-crystal corresponding to the lower half $U_q^-(\g)$ of the
associated quantized universal enveloping algebra.  Let $* : U_q(\g)
\to U_q(\g)$ be the $\C_q$-linear anti-automorphism given by
\[
\quad e_i^* = e_i, \quad f_i^* = f_i, \quad \left(q^h\right)^* =
q^{-h}.
\]
The map $*$ sends $U_q^-(\g)$ to $U_q^-(\g)$ and induces a map $* :
B_\infty \to B_\infty$ (see \cite[\S 8.3]{Kas95}).  We call the map
$*$ the \emph{Kashiwara involution}.

Let $B_\lambda$ be the $\g$-crystal corresponding to the irreducible
highest weight $U_q(\g)$-module of highest weight $\lambda$ and let
$b_\lambda$ be its highest weight element.  For two integral
dominant weights $\lambda$ and $\mu$, there is an inclusion of
crystals $B_{\lambda + \mu} \hookrightarrow B_\lambda \otimes B_\mu$
sending $b_{\lambda + \mu}$ to $b_\lambda \otimes b_\mu$.  It
follows from the tensor product rule that the image of this
inclusion contains all elements of the form $b \otimes b_\mu$ for $b
\in B_\lambda$. Thus we define a map
\[
\iota^{\lambda+\mu}_\lambda : B_\lambda \to B_{\lambda + \mu}
\]
which sends $b \in B_\lambda$ to the inverse image of $b \otimes
b_\mu$ under the inclusion $B_{\lambda + \mu} \hookrightarrow
B_\lambda \otimes B_\mu$.  While this map is not a morphism of
crystals, it is $\ke_i$-equivariant for all $i$ and takes
$b_\lambda$ to $b_{\lambda + \mu}$.

The maps $\iota^{\lambda + \mu}_\lambda$ make the family of crystals
$B_\lambda$ into a directed system and the crystal $B_\infty$ can be
viewed as the limit of this system.  We have $\ke_i$-equivariant
maps $\iota^\infty_\lambda : B_\lambda \to B_\infty$ which we will
simply denote by $\iota^\infty$ when it will cause no confusion.
Define $\varepsilon^* : B_\infty \to P_+$ by
\[
\varepsilon^*(b) = \min \{\lambda\ |\ b \in
\iota^\infty(B_\lambda)\}
\]
where we put the usual order on $P_+$, the positive weight lattice
of $\g$, given by $\lambda \ge \mu$ if and only if $\lambda - \mu
\in Q_+$. Recall that we also have the map $\varepsilon : B_\infty
\to P_+$ given by $\varepsilon(b)(h_i) = \varepsilon_i(b)$. Then by
\cite[Proposition~8.2]{Kas95}, the Kashiwara involution preserves
weights and satisfies
\begin{equation}
\varepsilon^*(b) = \varepsilon(b^*).
\end{equation}

Consider the crystal $B_\lambda \otimes B_\mu$.  Since $\varphi(b) =
\varepsilon(b) + \wt(b)$ for all $b \in B_\lambda$, we have that
$\varphi(b_\lambda) = \wt(b_\lambda) = \lambda$. It follows from the
tensor product rule for crystals that the highest weight elements of
$B_\lambda \otimes B_\mu$ are those elements of the form $b_\lambda
\otimes b$ for $b \in B_\mu$ with $\varepsilon(b) \le \lambda$. Thus
$\varepsilon^*(b^*) = \varepsilon(b) \le \lambda$ and so, by the
definition of $\varepsilon^*$, we have $b^* \in
\iota^\infty(B_\lambda)$. So we can consider $b^*$ as an element of
$B_\lambda$.  Furthermore, $\varepsilon(b^*) = \varepsilon^*(b) \le
\mu = \varphi(b_\mu)$ since $b \in B_\mu$. Thus $b_\mu \otimes b^*$
is a highest weight element of $B_\mu \otimes B_\lambda$.  Since
$B_\lambda \otimes B_\mu \cong B_\mu \otimes B_\lambda$ as crystals,
we can make the following definition.

\begin{defin}[{\cite[\S3]{KT06}}]
Let $\sigma^c_{B_\lambda,B_\mu} : B_\lambda \otimes B_\mu
\stackrel{\cong}{\to} B_\mu \otimes B_\lambda$ be the crystal
isomorphism given uniquely by $\sigma^c_{B_\lambda, B_\mu}(b_\lambda
\otimes b) = b_\mu \otimes b^*$ for $b_\lambda \otimes b$ a highest
weight element of $B_\lambda \otimes B_\mu$.
\end{defin}

\begin{theo}[{\cite[Theorem~3.1]{KT06}}] \label{thm:schutzenberger=kashiwara}
For $\g$ a simple complex Lie algebra, $\sigma^S = \sigma^c$ and so
$\sigma^c$ satisfies the cactus relation.
\end{theo}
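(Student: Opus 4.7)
Since $\sigma^S$ and $\sigma^c$ are both crystal isomorphisms $B_\lambda \otimes B_\mu \to B_\mu \otimes B_\lambda$, and since the category of $\g$-crystals consists of direct sums of irreducibles so that any crystal isomorphism is determined by its values on the highest weight element of each connected component, the plan is to verify $\sigma^S = \sigma^c$ on highest weight elements of $B_\lambda \otimes B_\mu$. By the tensor product rule these are exactly the tensors $b_\lambda \otimes b$ with $b \in B_\mu$ satisfying $\varepsilon_i(b) \le \langle h_i, \lambda \rangle$ for every $i \in I$. For any such element $\sigma^c(b_\lambda \otimes b) = b_\mu \otimes b^*$ by definition, so the task reduces to computing $\sigma^S(b_\lambda \otimes b)$ and recognising it as $b_\mu \otimes b^*$.

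Unravelling the definition, $\sigma^S(b_\lambda \otimes b) = \xi_{B_\mu \otimes B_\lambda}\bigl(\xi_{B_\mu}(b) \otimes b_\lambda^\flat\bigr)$, where I write $b_\lambda^\flat = \xi_{B_\lambda}(b_\lambda)$ for the lowest weight element of $B_\lambda$. My first subclaim is that $\xi_{B_\mu}(b) \otimes b_\lambda^\flat$ is a lowest weight element of its connected component in $B_\mu \otimes B_\lambda$. This follows from the tensor product rule for $\kf_i$: the intertwining relation $\xi \kf_{\theta(i)} = \ke_i \xi$ gives $\varphi_i(\xi_{B_\mu}(b)) = \varepsilon_{\theta(i)}(b)$, and $b_\lambda^\flat$ is killed by every $\kf_j$ with $\varepsilon_i(b_\lambda^\flat) = \langle h_{\theta(i)}, \lambda \rangle$; the highest-weight hypothesis on $b_\lambda \otimes b$, applied with $i$ replaced by $\theta(i)$, is exactly what is needed to force $\kf_i(\xi_{B_\mu}(b) \otimes b_\lambda^\flat) = \xi_{B_\mu}(b) \otimes \kf_i b_\lambda^\flat = 0$ for every $i$. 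Because $\xi_{B_\mu \otimes B_\lambda}$ acts on each connected component as the unique crystal bijection exchanging lowest and highest weight elements, $\sigma^S(b_\lambda \otimes b)$ is the unique highest weight element of the connected component of $\xi_{B_\mu}(b) \otimes b_\lambda^\flat$.

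The main obstacle is identifying this highest weight element with $b_\mu \otimes b^*$. A bookkeeping check using the weight shifts built into the identification $b^* \in \iota^\infty(B_\lambda)$ shows that the weight of $b_\mu \otimes b^*$ matches the highest weight of the component containing $\xi_{B_\mu}(b) \otimes b_\lambda^\flat$, and (using $\varepsilon(b^*) = \varepsilon^*(b) \le \lambda$ as already noted) that $b_\mu \otimes b^*$ is itself a highest weight element. The real content of the theorem is then that these two elements lie in the same irreducible $\g$-subcrystal of $B_\mu \otimes B_\lambda$. This requires a nontrivial compatibility between the Kashiwara involution $*$ on $B_\infty$ and the Schützenberger involution $\xi$, reflecting the way the longest Weyl group element $w_0$ acts on the canonical basis; this is precisely where the hypothesis that $\g$ is a simple complex Lie algebra (so that $w_0$ exists) enters essentially. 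The cleanest route is to work inside $B_\infty$ via the $\ke_i$-equivariant embeddings $\iota^\infty_\lambda$ and $\iota^\infty_\mu$, and to track how $*$ interacts with both $\xi$ and the weight shifts of these embeddings to produce the explicit sequence of $\kf_i$'s carrying $b_\mu \otimes b^*$ down to $\xi_{B_\mu}(b) \otimes b_\lambda^\flat$. Once the compatibility is in place the equality $\sigma^S = \sigma^c$ is immediate, and the cactus relation for $\sigma^c$ is inherited from the already-established relation for $\sigma^S$.
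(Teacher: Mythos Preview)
The paper does not give its own proof of this theorem; it is quoted from \cite{KT06} without argument, so there is no in-paper proof to compare against. That said, your outline matches the natural strategy: reduce to highest weight elements of $B_\lambda\otimes B_\mu$, unwind both $\sigma^S$ and $\sigma^c$ there, and match the results. The reduction is valid, your verification that $\xi_{B_\mu}(b)\otimes b_\lambda^\flat$ is a lowest weight element is correct, and the weight bookkeeping does work out as you claim.

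The proposal, however, stops exactly where the substance begins. You correctly flag that the crux is a ``nontrivial compatibility between the Kashiwara involution $*$ on $B_\infty$ and the Sch{\"u}tzenberger involution $\xi$,'' but you neither state what that compatibility is nor prove it. Writing that ``the cleanest route is to work inside $B_\infty$ \dots\ and to track how $*$ interacts with both $\xi$ and the weight shifts'' is a description of a plan, not an argument. Matching the highest weight of the component is not enough, since $B_\mu\otimes B_\lambda$ can contain several components of the same highest weight; one must actually pin down which component $b_\mu\otimes b^*$ lies in, and that is precisely the content of the result in \cite{KT06}. As written, your proposal is a correct setup followed by an unfilled gap at the only nontrivial step.
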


We call $\sigma^c$ the \emph{crystal commutor}.  Note that
Theorem~\ref{thm:schutzenberger=kashiwara} only implies that it is a
cactus commutor for $\g$ of finite type.


\subsection{The relationship between the various commutors}
\label{sec:various-commutors}

We have described three ways of constructing commutors in the
categories of $U_q(\g)$-modules or $\g$-crystals.  The three
definitions are closely related.  In \cite{HK06}, Henriques and
Kamnitzer defined a cactus commutor on the category of
finite-dimensional $U_q(\g)$-modules when $\g$ is of finite type,
using an analogue of the Sch{\" u}tzenberger involution on
$U_q(\g)$. This definition of the commutor involves some choices of
normalization. In \cite{KT07}, Kamnitzer and Tingley showed that
Drinfel\cprime d's commutor coming from the unitarized $R$-matrix
corresponds to the commutor coming from the Sch{\" u}tzenberger
involution up to normalization.  From there it follows that
Drinfel\cprime d's commutor preserves crystal lattices and acts on
crystal bases as the crystal commutor, up to signs.  The precise
statement is the following.

\begin{prop}[{\cite[Theorem~9.2]{KT07}}]
Suppose $(L_j,B_j)$ are crystal bases of two finite-dimensional
representations $V_j$, $j=1,2$, of $U_q(\g)$ for a simple complex
Lie algebra $\g$. Let $\sigma^D_{V_1,V_2}$ be the isomorphism $V_1
\otimes V_2 \cong V_2 \otimes V_1$ given by $\flip \circ \bar R$.
Then
\[
    \sigma^D_{V_1,V_2}(L_1 \otimes L_2) = L_2 \otimes L_1
\]
and thus $\sigma^D_{L_1 \otimes L_2}$ induces a map
\[
    \sigma^{D \mod q^{-1/2}(L_1 \otimes L_2)}_{V_1 \otimes V_2} : (L_1
    \otimes L_2)/q^{-1/2} (L_1 \otimes L_2) \to (L_2 \otimes
    L_1)/q^{-1/2} (L_2 \otimes L_1).
\]
For all $b_j \in B_j$, $j=1,2$,
\[
    \sigma^{D \mod q^{-1/2}(L_1 \otimes L_2)}_{V_1 \otimes V_2} (b_1
    \otimes b_2) = (-1)^{\left<\lambda + \mu - \nu,
    \rho^\vee\right>} \sigma^c_{B_1,B_2}(b_1 \otimes b_2)
\]
where $\lambda$, $\mu$ and $\nu$ are the highest weights of the
connected components of $B_1$, $B_2$ and $B_1 \otimes B_2$
containing $b_1$, $b_2$ and $b_1 \otimes b_2$ respectively, $\rho$
is half the sum of the positive roots of $\g$ and $\left< \cdot,
\cdot \right>$ denotes the pairing between the Cartan subalgebra
$\mathfrak{h} \subset \g$ and its dual $\mathfrak{h}^*$.
\end{prop}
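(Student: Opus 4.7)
The plan is to reduce both statements to a computation on highest weight vectors of the irreducible summands of $V_1 \otimes V_2$, exploiting the $U_q(\g)$-linearity of $\flip \circ R$ and the central nature of $R^{\text{op}} R$. Decompose $V_1 \otimes V_2 = \bigoplus_\nu V_\nu^{\oplus m_\nu}$. Since $R^{\text{op}} R$ commutes with the action of $U_q(\g)$, Schur's lemma forces it to act by a scalar on each $V_\nu$-isotypic component, and since $\flip \circ R$ is a $U_q(\g)$-module map $V_1 \otimes V_2 \to V_2 \otimes V_1$, it too is determined by its value on a highest weight vector of each component.

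I would first compute these values explicitly. In the standard factorization of the universal $R$-matrix as $R = q^{\sum h_i \otimes h^i}(1 + N)$, the summand $N$ is a sum of terms that lower weight in the first tensor factor, so $N$ annihilates any highest weight vector of the form $v_\lambda \otimes v$, where $v_\lambda \in V_\lambda$ is the highest weight vector and $v \in V_\mu$ is a singular vector of weight $\nu - \lambda$; thus $R(v_\lambda \otimes v) = q^{(\lambda,\, \nu - \lambda)} v_\lambda \otimes v$. The ribbon-element identity $R^{\text{op}} R = (u \otimes u)\Delta(u)^{-1}$, with $u$ acting on $V_\kappa$ as $q^{-(\kappa,\, \kappa + 2\rho)}$ up to a fixed sign, then gives the eigenvalue of $R^{\text{op}} R$ on the $V_\nu$-component as a pure power of $q$. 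Taking the inverse square root with the branch positive at $q = 1$ and composing with $\flip \circ R$ shows that $\flip \circ \bar R$ sends a highest weight vector of $V_\nu \subset V_\lambda \otimes V_\mu$ to a sign times the corresponding highest weight vector of $V_\nu \subset V_\mu \otimes V_\lambda$.

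In a basis adapted to the isotypic decomposition, $\flip \circ \bar R$ is therefore a signed permutation, as already witnessed by \eqref{eq:commutors-S2}. The lattice preservation $\sigma^D(L_1 \otimes L_2) = L_2 \otimes L_1$ follows because such a basis can be chosen inside the crystal lattices via the compatibility of global bases with the decomposition into irreducible summands. To identify the induced map on $B_1 \otimes B_2$, I would compare with the definition of $\sigma^c$ from Section~\ref{sec:commutor-kash-involution}: on a highest weight element $b_\lambda \otimes b$ of the $B_\nu$-component of $B_\lambda \otimes B_\mu$, the crystal commutor returns $b_\mu \otimes b^*$. A highest weight vector of the $V_\nu$-summand lifts such a crystal element, and one checks that its image under $\flip \circ \bar R$ represents $b_\mu \otimes b^*$ modulo $q^{-1/2}$, using the fact that the Kashiwara involution is the $q \to \infty$ shadow of the canonical intertwiner on highest weight vectors. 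Kashiwara-operator equivariance then propagates the formula across each connected component of $B_1 \otimes B_2$.

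The main obstacle is the sign. Writing the eigenvalue of $R^{\text{op}} R$ on the $V_\nu$-component as $q^{2m}$, the chosen square root disagrees with $\flip \circ R$ by a sign whose exponent, after manipulating the Casimir eigenvalues $(\kappa, \kappa + 2\rho)$ and using the identity $(\kappa, 2\rho) = \langle \kappa, \rho^\vee \rangle$, reduces modulo $2$ to $\langle \lambda + \mu - \nu, \rho^\vee \rangle$. This bookkeeping is essentially the same calculation that appears in Drinfel\cprime d's original analysis of the unitarization in \cite{Dri90}, but extracting the exact parity in this form is the delicate step.
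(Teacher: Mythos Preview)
The paper does not give its own proof of this proposition; it is quoted verbatim as \cite[Theorem~9.2]{KT07} and used as a black box in the discussion of Section~\ref{sec:various-commutors}. So there is no ``paper's proof'' to compare against.

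That said, your outline is the natural strategy and is in the spirit of the argument in \cite{KT07}: reduce to isotypic components, compute the scalar by which $R^{\text{op}}R$ acts on each $V_\nu$ via the ribbon element, and then extract the sign from the square root. Two places deserve more care. First, the lattice statement $\sigma^D(L_1\otimes L_2)=L_2\otimes L_1$ does not follow just from knowing that $\flip\circ\bar R$ is a signed permutation in a basis ``adapted to the isotypic decomposition''; you need to know that the specific highest weight vectors on which you computed lie in the crystal lattice and that the resulting images do as well. This is exactly where the compatibility of the global basis with tensor product decompositions (and with the Kashiwara involution) is used in \cite{KT07}, and it is not a one-line observation. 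Second, the sentence ``one checks that its image under $\flip\circ\bar R$ represents $b_\mu\otimes b^*$'' is the entire content of the identification with $\sigma^c$, not an afterthought; in \cite{KT07} this goes through the comparison of Drinfel\cprime d's commutor with the Sch{\"u}tzenberger-involution commutor of \cite{HK06}, together with Theorem~\ref{thm:schutzenberger=kashiwara}. Your sketch of the sign computation is on the right track, but the identity you write as $(\kappa,2\rho)=\langle\kappa,\rho^\vee\rangle$ is not literally true in general normalizations; what is actually needed is only a congruence modulo~$2$ for dominant integral $\lambda,\mu,\nu$, and that is what the Casimir eigenvalue manipulation gives.
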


We thus have essentially two definitions of the crystal commutor.
The first, using the Sch{\" u}tzenberger involution (and coinciding
with Drinfel\cprime d's commutor in the crystal limit) only applies
to $\g$ of finite type but with this definition, it is apparent that
the commutor satisfies the cactus relation.  The second definition,
using the Kashiwara involution, applies to $\g$ of arbitrary type
but it is not easy to see that it satisfies the cactus relation. In
the next section, we will explain how a geometric interpretation of
this commutor using quiver varieties allows one to prove that this
is indeed the case.


\section{A geometric realization of the crystal commutor}
\label{sec:geom-commutor}

In this section we describe a geometric realization of the crystal
commutor defined in Section~\ref{sec:commutor-kash-involution} in
the language of quiver varieties.  This realization yields new
insight into the coboundary structure and equips us with new
geometric tools.  Using these tools, one is able to show that the
category of $\g$-crystals for an arbitrary symmetrizable Kac-Moody
algebra $\g$ can be given the structure of a coboundary category.
This extends the previously known result, which held for $\g$ of
finite type.


\subsection{Quiver varieties}

Lusztig \cite{L91}, Nakajima \cite{Nak94, Nak98, Nak01} and Malkin
\cite{Mal03} have introduced varieties associated to quivers
(directed graphs) built from the Dynkin graph of a Kac-Moody algebra
$\g$ with symmetric Cartan matrix.  These varieties yield geometric
realizations of the quantum group $U_q(\g)$, the representations of
$\g$, and tensor products of these representations in the homology
(or category of perverse sheaves) of such varieties.  In addition,
Kashiwara and Saito \cite{KS97,S02}, Nakajima \cite{Nak01} and
Malkin \cite{Mal03} have used quiver varieties to give a geometric
realization of the crystals of these objects. Namely, they defined
geometric operators on the sets of irreducible components of quiver
varieties, endowing these sets with the structure of crystals.  In
the current paper, we will focus on the $\mathfrak{sl}_2$ case of
these varieties for simplicity.  In this case, the quiver varieties
are closely related to grassmannians and flag varieties.

In this section, all vector spaces will be complex. Fix integers $w
\ge 0$ and $n \ge 1$, and $\w = (w_i)_{i=1}^n \in (\Z_{\ge 0})^n$
such that $\sum_{i=1}^n w_i = w$.  Let $W$ be a $w$-dimensional
vector space and let
\[
    0 = W_0 \subseteq W_1 \subseteq \dots \subseteq W_n = W,\quad
    \dim W_i/W_{i-1} = w_i \text{ for } 1 \le i \le n,
\]
be an $n$-step partial flag in $W$.  Define the \emph{tensor product
quiver variety}
\[
    \T(\w) = \{ (U,t)\ |\ U \subseteq W,\ t \in \End W,\ t(W_i)
    \subseteq W_{i-1}\ \forall\, i,\ \im t \subseteq U \subseteq \ker t\}.
\]
We use the notation $\T(\w)$ since, up to isomorphism, this variety
depends only on the dimensions of the subspaces $W_i$, $0 \le i \le
n$. We have
\[
    \T(\w) = \bigsqcup_{u=0}^w \T(u,\w),\quad \text{where}\quad
    \T(u,\w) = \{(U,t) \in \T(\w)\ |\ \dim U = u\}.
\]
Let $B(u,\w)$ denote the set of irreducible components of $\T(u,\w)$
and set $B(\w) = \bigsqcup_u B(u,\w)$.

Define
\begin{gather*}
    \wt : B(\w) \to P,\quad \wt (X) = w-2u \text{ for } X \in
    B(u,\w),\\
    \varepsilon : \T(\w) \to \Z_{\ge 0},\quad \varepsilon(U,t) =
    \dim U/\im t,\\
    \varphi : \T(\w) \to \Z_{\ge 0}, \quad \varphi(U,t) = \dim \ker t/U
    = \varepsilon(U,t) + w -2\dim U.
\end{gather*}
For $k \in \Z_{\ge 0}$, let
\[
    \T(u,\w)_k = \{(U,t) \in \T(u,\w)\ |\ \varepsilon(U,t) = k\},
\]
and for $X \in B(u,\w)$, define $\varepsilon(X)=\varepsilon(U,t)$
and $\varphi(X) = \varepsilon(U,t)$ for a generic point $(U,t)$ of
$X$. Let
\[
    B(u,\w)_k = \{X \in B(u,\w)\ |\ \varepsilon(X)=k\},\quad B(\w)_k
    = \bigsqcup_u B(u,\w)_k.
\]
The map
\[
    \T(u,\w)_k \to \T(u-k,\w)_0,\quad (U,t) \mapsto (\im t, t)
\]
is a Grassmann bundle and thus induces an isomorphism
\begin{equation} \label{eq:irrcomp-isom}
    B(u,\w)_k \cong B(u-k,\w)_0.
\end{equation}
We then define crystal operators on $B(\w)$ as follows. Suppose $X'
\in B(u-k,\w)_0$ corresponds to $X \in B(u,\w)_k$ under the
isomorphism \eqref{eq:irrcomp-isom}. Define
\begin{align*}
    \kf^k : B(u-k,\w)_0 \to B(u,\w)_k,\quad \kf^k(X')=X,\\
    \ke^k : B(u,\w)_k \to B(u-k,\w)_0,\quad \ke^k(X)=X'.
\end{align*}
For $k > 0$, we then define $\ke_i : B(\w) \to B(\w)$ by
\[
    \ke : B(u,\w)_k \stackrel{\ke^k}{\longrightarrow} B(u-k,\w)_0
    \stackrel{\kf^{k-1}}{\longrightarrow} B(u-1;\w)_{k-1},
\]
and set $\ke_i(X) = 0$ for $X \in B(u,\w)_0$.  For $k > 2u - w$,
define
\[
    \kf : B(u,\w)_k \stackrel{\ke^k}{\longrightarrow} B(u-k,\w)_0
    \stackrel{\kf^{k+1}}{\longrightarrow} B(u+1,\w)_{k+1},
\]
and set $\kf(X)=0$ for $X \in B(u,\w)_k$ with $k \le 2u-w$.  The
maps $\ke^k$ and $\kf^k$ defined above can be considered as the
$k$th powers of $\ke$ and $\kf$ respectively.

\begin{theo}[{\cite[\S7]{Nak01}}] \label{thm:tpqv}
The operators $\varepsilon, \varphi, \wt, \ke$, and $\kf$ endow the
set $B(\w)$ with the structure of an $\sl_2$-crystal and $B(\w)
\cong B_{w_1} \otimes \dots \otimes B_{w_n}$ as $\sl_2$-crystals.
\end{theo}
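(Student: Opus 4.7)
The plan is to prove the two assertions of Theorem~\ref{thm:tpqv} in order: first establish that the operators $\varepsilon$, $\varphi$, $\wt$, $\ke$, $\kf$ satisfy the axioms of an $\sl_2$-crystal on $B(\w)$, and then identify the resulting crystal with $B_{w_1}\otimes\cdots\otimes B_{w_n}$ by induction on $n$.

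For the crystal axioms, the key input is that the map $\T(u,\w)_k \to \T(u-k,\w)_0$, $(U,t) \mapsto (\im t, t)$, is a Grassmann bundle with fibre $\mathrm{Gr}(k, \ker t/\im t)$. Since a Grassmann bundle with irreducible base and irreducible fibres has an irreducible total space, this induces the bijection \eqref{eq:irrcomp-isom} on sets of irreducible components. From this bijection one checks directly that $\ke$ and $\kf$ are mutually inverse bijections between $B(u,\w)_k$ and $B(u-1,\w)_{k-1}$ for $k\ge 1$, that $\wt(\ke X) = \wt(X) + 2$ and $\wt(\kf X) = \wt(X) - 2$, and that $\varepsilon(\kf X) = \varepsilon(X) + 1$ whenever $\varphi(X) > 0$. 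Combined with the identity $\varphi - \varepsilon = \wt$, which follows immediately from the definitions, these are exactly the axioms of an $\sl_2$-crystal.

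For the tensor product isomorphism, the base case $n = 1$ is immediate: $t(W_1) \subseteq W_0 = 0$ forces $t = 0$, so $\T(u,(w_1)) = \mathrm{Gr}(u, W)$, which is irreducible. The resulting $w_1+1$ components have weights $w_1, w_1 - 2, \dots, -w_1$, and the geometric Kashiwara operators link them into a single chain that matches $B_{w_1}$ of Example~\ref{eg:B_n}. For the inductive step, put $\w' = (w_1, \dots, w_{n-1})$ and $W' = W_{n-1}$, and stratify $\T(\w)$ by invariants such as $\dim(U\cap W')$ and $\dim(U \cap t^{-1}(W'))$, which record how $(U,t)$ restricts to $W'$ and projects to $W/W'$. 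Each stratum should fibre as a locally trivial bundle with connected, irreducible fibres over a product of a stratum of $\T(\w')$ (coming from $(U\cap W', t|_{W'})$) with a stratum of $\T((w_n))$ (coming from the induced subspace of $W/W'$). Passing to irreducible components yields a bijection $B(\w) \leftrightarrow B(\w') \otimes B((w_n))$ as sets.

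The main technical obstacle is to verify that this set-theoretic bijection intertwines the geometric $\ke,\kf$ on $B(\w)$ with the operators on $B(\w') \otimes B((w_n))$ defined by Kashiwara's combinatorial tensor product rule (Theorem~\ref{thm:tensor-product-rule}). The case split in that rule, comparing $\varphi_i(b_1)$ with $\varepsilon_i(b_2)$, should translate geometrically into whether applying $\ke$ to $(U,t)$ alters $U\cap W'$ or the quotient $U/(U\cap W') \subseteq W/W'$, which in turn reflects whether the Grassmann fibre on one factor or the other has positive-dimensional moduli. Once this case analysis is carried through, the inductive hypothesis $B(\w') \cong B_{w_1}\otimes\cdots\otimes B_{w_{n-1}}$ combined with the base case completes the identification.
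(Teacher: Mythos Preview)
The paper does not supply its own proof of this theorem; it is stated with attribution to Nakajima \cite[\S7]{Nak01} and then used as input for the geometric commutor construction in Section~\ref{sec:geom-commutor}. There is therefore no in-paper argument to compare your proposal against.

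On its own merits, your outline has the right architecture (crystal axioms via the Grassmann bundle \eqref{eq:irrcomp-isom}, then induction on $n$ by restriction to $W'=W_{n-1}$ and passage to $W/W'$), but one concrete point fails. Because $t(W_n)\subseteq W_{n-1}=W'$, we have $t^{-1}(W')=W$, so your second stratifying invariant $\dim(U\cap t^{-1}(W'))$ equals $\dim U$ identically and carries no information. The genuinely relevant data beyond $\dim(U\cap W')$ involve how $\im t$ sits relative to $W'$: since $t$ factors through $W'$, the pair $(U\cap W',\,t|_{W'})$ lands in $\T(\w')$ and the image of $U$ in $W/W'$ lands in $\T((w_n))$, but there is residual ``connecting'' data---the component of $t$ mapping $W/W'$ into $W'$---that is not captured by either factor and must be absorbed into the fibre. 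Controlling the irreducibility of that fibre, and showing that the generic value of $\varepsilon$ on the total component is governed by the tensor-product comparison $\varphi(b_1)$ versus $\varepsilon(b_2)$, is exactly the technical core you flag but do not resolve. Nakajima's argument handles this in the general quiver-variety setting; specializing it to $\sl_2$ does not avoid the bookkeeping.
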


We let $\phi : B(\w) \cong B_{w_1} \otimes \dots \otimes B_{w_n}$
denote the isomorphism of Theorem~\ref{thm:tpqv}.


\subsection{The geometric realization of the crystal commutor}

Fix a hermitian form on $W$.  Let $t^\dag$ denote the hermitian
adjoint of $t \in \End W$ and let $S^\bot$ denote the orthogonal
complement to a subspace $S \subseteq W$. If we let $\hat W_i =
W_{n-i}^\bot$ for $0 \le i \le n$, and $\hat \w = (\hat
w_i)_{i=1}^n$ where
\[
\hat w_i = \dim \hat W_i/ \hat W_{i-1} = \dim W_{n-i}^\bot /
W_{n-i+1}^\bot = w_{n-i+1},
\]
then
\[
    \T(\hat \w) = \{ (U, t)\ |\ U \subseteq W,\ t \in \End W,\
    t(\hat W_i) \subseteq \hat W_{i-1}\ \forall\, i,\ \im t \subseteq U
    \subseteq \ker t\}.
\]
Note that $\varepsilon(U,t)=0$ if and only if $U = \im t$.  Also,
for $t \in \End W$,
\begin{gather*}
    t(W_i) \subseteq W_{i-1} \Rightarrow t^\dag(\hat W_{n-i+1}) \subseteq
    \hat W_{n-i}.
\end{gather*}
Therefore,
\[
    (\im t, t) \in \T(\w) \iff (\im t^\dag, t^\dag) \in \T(\hat \w),
\]
and the map $(\im t, t) \mapsto (\im t^\dag, t^\dag)$ induces
isomorphisms
\[
    \T(u,\w)_0 \cong \T(u,\hat \w)_0,\quad B(u,\w)_0 \cong B(u,\hat \w)_0.
\]
We denote the isomorphism $B(u,\w)_0 \cong B(u,\hat \w)_0$ by $X
\mapsto X^\dag$ for $X \in B(u,\w)_0$.  Since the elements of
$B(\w)_0$ are precisely the highest weight elements of the crystal
$B(\w)$, a commutor is uniquely determined by its action on these
elements.

\begin{theo}[{\cite[\S4.2]{Sav08a}}]
\begin{enumerate}
    \item If $n=2$ and $X \in B(\w)_0$, we have
        \[
            \phi^{-1} \circ \sigma^c_{B_{w_1}, B_{w_2}} \circ \phi (X) = X^\dag,
        \]
        and thus the map $X \mapsto X^\dag$ corresponds
        to the crystal commutor on highest weight elements.

    \item If $n=3$ and $X \in B(\w)_0$,
        \begin{multline*}
            \phi^{-1} \circ \left(\sigma^c_{B_{w_1}, B_{w_3} \otimes B_{w_2}}
            \circ \left(\id_{B_{w_1}} \otimes \sigma^c_{B_{w_2} \otimes
            B_{w_3}}\right)\right) \circ \phi(X) = X^\dag \\
            = \phi^{-1} \circ
            \left( \sigma^c_{B_{w_2} \otimes B_{w_1}, B_{w_3}} \circ \left(
            \sigma^c_{B_{w_1}, B_{w_2}} \otimes \id_{B_{w_3}} \right) \right)
            \circ \phi (X),
        \end{multline*}
        and thus the crystal commutor satisfies the cactus relation.
\end{enumerate}
\end{theo}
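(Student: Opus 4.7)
The plan is to reduce both parts to statements about the action on highest weight elements—which under $\phi$ correspond precisely to $B(\w)_0$—and verify the required identities geometrically. Since any morphism of $\sl_2$-crystals is uniquely determined by its restriction to highest weight elements, this reduction loses no information.

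For part (1), I recall that $\sigma^c_{B_{w_1},B_{w_2}}$ is characterized by $b_{w_1}\otimes b\mapsto b_{w_2}\otimes b^*$ on highest weight elements, where $*$ is the Kashiwara involution. Under $\phi$, the set $B(\w)_0$ corresponds to the highest weight elements $b_{w_1}\otimes b$ of $B_{w_1}\otimes B_{w_2}$ (those with $\varepsilon(b)\le w_1$), and $B(\hat\w)_0$ corresponds similarly to the highest weight elements of $B_{w_2}\otimes B_{w_1}$. Both $X^\dag$ and $\phi^{-1}\circ\sigma^c\circ\phi(X)$ lie in $B(\hat\w)_0$, so the content is the following: if $X\in B(u,\w)_0$ is represented by a generic pair $(U,t)=(\im t,t)$ with $\phi(X)=b_{w_1}\otimes b$, then $(\im t^\dag,t^\dag)$ represents the component $Y\in B(u,\hat\w)_0$ with $\phi(Y)=b_{w_2}\otimes b^*$. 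To verify this I would track how Nakajima's isomorphism $\phi$ and Saito's geometric description of $*$ manifest concretely in the quiver variety: the hermitian adjoint exchanges kernels of $t$ with orthogonal complements of images and reverses the flag, which matches the combinatorics of $*$ on $B_\infty$ in this parametrization.

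For part (2), each composition is a crystal isomorphism $B_{w_1}\otimes B_{w_2}\otimes B_{w_3}\to B_{w_3}\otimes B_{w_2}\otimes B_{w_1}$, so by determinacy it suffices to check both agree with $X\mapsto X^\dag$ on highest weight elements. I would apply part (1) to appropriate sub- and coarsened tensor product quiver varieties to compute each step. In the first composition, the inner map $\id_{B_{w_1}}\otimes\sigma^c_{B_{w_2},B_{w_3}}$, acting on highest weight elements of the triple tensor product, can be related via part (1) applied inside $\T((w_2,w_3))$ to hermitian adjunction on the sub-flag $W_1\subseteq W_2\subseteq W_3$ (working in the orthogonal complement of $W_1$); the outer map $\sigma^c_{B_{w_1},B_{w_3}\otimes B_{w_2}}$ corresponds via part (1) applied to the coarsened flag of dimensions $(w_1,w_2+w_3)$ to hermitian adjunction on the resulting two-step flag. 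Composing these two partial dualizations reconstructs the full hermitian dual of the three-step flag, namely $X^\dag$. The second composition is handled symmetrically, with the sub-flag and coarsening interchanged, yielding the same $X^\dag$.

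The crux of the proof is the geometric identification in part (1) of the hermitian adjoint $\dag$ with the Kashiwara involution $*$. These are a priori unrelated involutions—one purely linear-algebraic, the other defined algebraically via the $\C_q$-linear anti-automorphism of $U_q^-(\g)$—and matching them through $\phi$ requires detailed analysis of Saito's construction of $*$ in the quiver variety setting. Once this compatibility is in place, part (2) follows essentially formally, since both iterated compositions reassemble the full adjoint from its partial pieces in the two possible orderings.
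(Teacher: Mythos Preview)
The present paper does not prove this theorem: it is stated with attribution to \cite[\S4.2]{Sav08a}, and the surrounding text explicitly refers the reader to that paper for details. There is therefore no proof in this paper against which to compare your proposal.

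That said, your outline is in the right spirit. You correctly isolate the crux of part (1) as the identification, through $\phi$, of the hermitian-adjoint involution $\dag$ with the Kashiwara involution $*$, and you are candid that this step requires Saito's geometric realization of $*$, which you do not actually carry out here. For part (2), decomposing the full adjoint into partial adjoints associated to a sub-flag and to a coarsened two-step flag is the right shape, but there is a gap in how you invoke part (1) for the inner step: if $b_{w_1}\otimes b_2\otimes b_3$ is highest weight in the triple tensor product, the element $b_2\otimes b_3$ need not be highest weight in $B_{w_2}\otimes B_{w_3}$, so part (1) as stated (which only describes $\sigma^c$ on highest weight elements) does not apply directly to the inner commutor. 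One must either extend the geometric description of $\sigma^c$ beyond highest-weight components, or argue at the level of points of the variety and then pass to irreducible components; your sketch does not indicate which route you intend or why it succeeds.
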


One advantage of the geometric interpretation of the crystal
commutor defined here is that it extends to any symmetrizable
Kac-Moody algebra $\g$.  In particular, if $\g$ has symmetric Cartan
matrix, then there exists a tensor product quiver variety whose
irreducible components can be given the structure of a tensor
product crystal. There then exists a map $X \mapsto X^\dag$, which
generalizes the map defined above.  One can show that, in the case
of two factors, this map corresponds to the crystal commutor.  For
three factors, the compositions $\sigma^c_{B_{\lambda_1},
B_{\lambda_3} \otimes B_{\lambda_2}} \circ \left(\id_{B_{\lambda_1}}
\otimes \sigma^c_{B_{\lambda_2} \otimes B_{\lambda_3}}\right)$ and
$\sigma^c_{B_{\lambda_2} \otimes B_{\lambda_1}, B_{\lambda_3}} \circ
\left( \sigma^c_{B_{\lambda_1}, B_{\lambda_2}} \otimes
\id_{B_{\lambda_3}} \right)$ both correspond (on highest weight
elements) to the map $X \mapsto X^\dag$ and are therefore equal.
Thus the commutor satisfies the cactus relation. When $\g$ is
symmetrizable but with non-symmetric Cartan matrix, one can use a
well-known folding argument to obtain the same result from the
symmetric case. We therefore have the following theorem.
\begin{theo}[{\cite[Theorem~6.4]{Sav08a}}]
For a symmetrizable Kac-Moody algebra $\g$, the category of
$\g$-crystals is a coboundary monoidal category with cactus commutor
$\sigma^c$.
\end{theo}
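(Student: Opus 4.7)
The plan is to verify the two axioms of Definition~\ref{def:coboundary-cat} for $\sigma^c$. The involution axiom is nearly immediate from the definition via the Kashiwara involution, so the real content is the cactus relation, and this is where the geometric realization from the previous subsection does the heavy lifting.

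For the involution axiom, any crystal endomorphism of $B_\lambda \otimes B_\mu$ is determined by its values on highest weight elements, since every element of a connected component is obtained from the highest weight vector by applying a sequence of $\kf_i$'s. On such a highest weight element $b_\lambda \otimes b$ (with $b \in B_\mu$ and $\varepsilon(b) \le \lambda$), one computes
\[
    \sigma^c_{B_\mu, B_\lambda} \circ \sigma^c_{B_\lambda, B_\mu} (b_\lambda \otimes b) = \sigma^c_{B_\mu, B_\lambda}(b_\mu \otimes b^*) = b_\lambda \otimes b^{**} = b_\lambda \otimes b,
\]
since $*$ is an involution on $B_\infty$.

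For the cactus relation, first suppose $\g$ has symmetric Cartan matrix. Both sides of diagram \eqref{eq:cactus-relation}, evaluated for three irreducible crystals $B_{\lambda_1}, B_{\lambda_2}, B_{\lambda_3}$, are crystal endomorphisms of $B_{\lambda_1} \otimes B_{\lambda_2} \otimes B_{\lambda_3}$ and so are determined by their values on highest weight elements. By the three-factor statement of the previous theorem (extended from $\sl_2$ to arbitrary simply-laced $\g$), both compositions correspond under $\phi$ to the single map $X \mapsto X^\dag$ on $B(\w)_0$, and hence agree as crystal isomorphisms. For a general symmetrizable $\g$, choose a simply-laced Kac-Moody algebra $\g'$ equipped with a Dynkin diagram automorphism $\tau$ whose fixed subalgebra is $\g$. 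The standard folding construction identifies $\g$-crystals with $\tau$-invariant $\g'$-crystals in a way compatible with tensor products, and since the Kashiwara involution commutes with $\tau$, the cactus commutor $\sigma^c$ for $\g'$-crystals restricts to $\sigma^c$ for $\g$-crystals. The cactus relation therefore descends from the symmetric case.

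The main obstacle is the folding step: one must verify that tensor products, highest weight elements, and the Kashiwara involution are all genuinely $\tau$-equivariant, so that the $\g$-cactus commutor is truly the restriction of the $\g'$-cactus commutor. A conceptually cleaner but technically heavier alternative would be to construct tensor product quiver varieties directly for symmetrizable $\g$ (rather than only for simply-laced types) and run the Hermitian-adjoint argument in that broader setting, which trades the combinatorics of folding for additional geometric bookkeeping.
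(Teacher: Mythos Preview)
Your proposal is correct and follows essentially the same route as the paper: the cactus relation is established first for symmetric Cartan matrix via the tensor product quiver variety interpretation (both sides of \eqref{eq:cactus-relation} correspond to $X \mapsto X^\dag$ on highest weight elements), and then extended to the general symmetrizable case by folding. Your explicit verification of the involution axiom via $b^{**}=b$ is a small addition not spelled out in the paper's sketch, but it is correct and in the same spirit.
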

This generalizes the previously known result for $\g$ of finite
type.  We refer the reader to \cite{Sav08a} for details.


\bibliographystyle{abbrv}
\bibliography{biblist}

\end{document}